\newcommand{\R}{{\mathbb R}}
\newcommand{\Q}{{\mathbb Q}}
\newcommand{\Z}{{\mathbb Z}}
\newcommand{\N}{{\mathbb N}}
\newcommand{\cO}{{\mathcal O}}
\newcommand{\cG}{{\mathcal G}}
\newcommand{\cH}{{\mathcal H}}
\newtheorem{thm}{Theorem}[section]
\newtheorem{cor}[thm]{Corollary}
\newtheorem{lem}[thm]{Lemma}
\newtheorem{question}[thm]{Question}
\newtheorem{definition}[thm]{Definition}
\newtheorem{example}[thm]{Example}
\newtheorem{remark}[thm]{Remark}
\newtheorem{proposition}[thm]{Proposition}
\theoremstyle{definition}
\begin{document}
\title[Divergence, Undistortion and H\"{o}lder Cont. Cocycle Superrigidity]{Divergence, Undistortion and H\"{o}lder Continuous Cocycle Superrigidity for Full Shifts}
\author{Nhan-Phu Chung}
\address{Nhan-Phu Chung, Department of Mathematics, Sungkyunkwan University, Suwon 440-746, Korea. Tel: +82 031-299-4819} 
\email{phuchung@skku.edu;phuchung82@gmail.com}
\author{Yongle Jiang}
\address{Yongle Jiang, Department of Mathematics, Sungkyunkwan University, Suwon 440-746, Korea.}
\email{yongleji@buffalo.edu}

\date{\today}
\maketitle
\begin{abstract}
In this article, we will prove a full topological version of Popa's measurable cocycle superrigidity theorem for full shifts \cite{Popa2}. More precisely, we prove that every H\"{o}lder continuous cocycle for the full shifts of every finitely generated group $G$ that has one end, undistorted elements and sub-exponential divergence function is cohomologous to a group homomorphism via a H\"{o}lder continuous transfer map if the target group is complete and admits a compatible bi-invariant metric. Using the ideas of Behrstock, Dru\c tu, Mosher, Mozes and Sapir \cite{bd,BDM,dms,ds}, we show that the class of our acting groups is large including wide groups having undistorted elements and one-ended groups with strong thick of finite orders. As a consequence, irreducible uniform lattices of most of higher rank connected semisimple Lie groups, mapping class groups of $g$-genus surfaces with $p$-punches, $g\geq 2, p\geq 0$; Richard Thompson groups $F,T,V$; $Aut(F_n)$, $Out(F_n)$, $n\geq3$; certain (2 dimensional)-Coxeter groups; and one-ended right-angled Artin groups are in our class. This partially extends the main result in \cite{CJ}.
\end{abstract}

\section{introduction}
During studying measurable orbit equivalence, Popa established his celebrated cocycle superrigidity theorem for Bernoulli shifts in the measurable setting \cite{Popa2, Popa3}: for certain groups $G$ the full shift action $G\curvearrowright \prod_G(A,\mu)$ is $\mathscr{U}_{fin}$-measurable cocycle superrigid, for every finite set $A$, where $\mathscr{U}_{fin}$ is the class of Polish groups which arise as closed subgroups of
the unitary groups of finite von Neumann algebras. The class $\mathscr{U}_{fin}$ contains all countable groups. In \cite{CJ} we proved a topological version of Popa's theorem by showing that for every non-torsion finitely generated group $G$ with one end, every full shift $G\curvearrowright A^G$ is continuous $H$-cocycle rigid for every countable group $H$. Shortly after that, Cohen \cite{cohen} removed the ``non-torsion" assumption in \cite{CJ}. As one expects, this continuous cocycle superrigidity theorem has applications in continuous orbit equivalence theory, which was systematically studied in \cite{xinli}.

As Popa's theorem holds for the class $\mathscr{U}_{fin}$ groups containing all countable groups, one may wonder whether we can extend the main result in \cite{CJ} to other target groups belonging to $\mathscr{U}_{fin}$. More precisely, we study continuous cocycles for shifts with target groups being any Polish groups that admit compatible bi-invariant metrics. And following \cite[2.b.]{Furman}, we denote this class of groups by $\mathscr{G}_{inv}$, which contains the class $\mathscr{U}_{fin}$.  

On the other direction, in 1994, in a seminal paper \cite{KatokSpatzier}, Katok and Spatzier established their pioneer rigidity properties of hyperbolic actions of higher rank abelian groups $\Z^k$, $k\geq 2$. They showed that for standard $\Z^k$-hyperbolic actions, $k\geq 2$ including all known irreducible examples, every H\"{o}lder continuous cocycle into $\R^l$ is H\"{o}lder cohomologous to a constant cocycle without any assumptions on periodic data as  Liv\v sic's theory for $\Z$-actions. After that, many results of rigidity for cocycles in higher rank abelian group actions have been proved, see the recent monograph \cite{KatokNitica} for details and references. As a nature, we would like to investigate whether   
results of \cite{KatokSpatzier} can be extended for hyperbolic actions of groups beyond $\Z^k$. This paper is our first steps to understand this question when we consider our ``simplest Anosov action'', the full shifts of general countable groups, and the target groups of cocycles are complete and admits compatible bi-invariant metrics. Following \cite{katok_schmidt, Schmidt95}, we investigate the following class of groups:
\begin{equation*}
\begin{split}
\cG_H &:=\{G|~ \mbox{Every H\"{o}lder continuous cocycle~ $c: G\times A^G\to H$ is H\"{o}lder cohomologous}\\
& \mbox{to a group homomophism from $G$ to $H$ for all $H\in \mathscr{G}_{inv}$ and all finite set $A$}\}.
\end{split}
\end{equation*}
Since any continuous cocycles into discrete target groups are automatically H\"{o}lder continuous, we have $\cG_H\subseteq \cG$, where $\cG$ is defined in \cite{CJ} as follows:
\begin{equation*}
\begin{split}
\cG &:=\{G|~ \mbox{Every continuous cocycle~ $c: G\times A^G\to H$ is continuous cohomologous}\\
& \mbox{to a group homomophism from $G$ to $H$ for all countable discrete group $H$}\\
& \mbox{ and all finite set $A$}\}.
\end{split}
\end{equation*}
We proved in \cite{CJ} that if $G$ is finitely generated and non-torsion, then $G$ belongs to $\cG$ iff $G$ has one end. We expect the same conclusion still holds if we replace $\cG$ with $\cG_H$. Along this direction, we prove the following theorem, which is a counterpart of \cite[Theorem 1]{CJ} in our setting. 

\begin{thm}\label{main thm in 2nd approach}
Let $A$ be a finite set and $G$ be a finitely generated infinite group.  Let $\sigma: G\curvearrowright X\subseteq A^G$ be a subshift. Assume 
the following conditions are satisfied:

(1) $G$ has one end and its divergence function grows sub-exponentially (see Section \ref{section on divergence}).

(2) $G$ has slow growth distortion property (abbreviated as (SDT) property) (see Definition \ref{def of SDT groups}).

(3) $\sigma$ is topological mixing, and the homoclinic equivalence relation $\Delta_X$ has strong $a$-specification (see Section \ref{section on specification property}) for every element $a$ in $G$ with (SDT) property. 

Then every H\"{o}lder continuous cocycle $c:G\times X\to H$ on any group $H\in\mathscr{G}_{fin}$ is cohomologous to a homomorphism via a continuous transfer map. If we further assume $G$ has undistorted elements (see Section \ref{section on undistorted elements}), then this transfer map can be chosen to be H\"{o}lder continuous.
\end{thm}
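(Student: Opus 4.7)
The plan follows the paradigm of Popa's measurable cocycle superrigidity theorem and its topological realization in \cite{CJ}, now replacing the discreteness of the target by completeness and bi-invariance of its metric, and incorporating the Katok--Schmidt style use of H\"older regularity \cite{katok_schmidt, Schmidt95}. The ultimate goal is a continuous $b : X \to H$ and a homomorphism $\rho : G \to H$ satisfying $c(g,x) = b(x)^{-1} \rho(g)\, b(gx)$. First I would fix a reference point $x_0 \in X$ whose homoclinic class $[x_0]_{\Delta_X}$ is dense, which is available from topological mixing together with the strong $a$-specification of $\Delta_X$. The task then reduces to constructing $b$ on $[x_0]_{\Delta_X}$, extending by continuity, and recovering $\rho$ along a dense orbit.

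The central step is the construction of a \emph{homoclinic transfer} $\Phi : \Delta_X \to H$ by a limit of the form
\[
\Phi(x,y) := \lim_{n \to \infty} c(g_n, x)^{-1} c(g_n, y),
\]
possibly telescoped through interpolating homoclinic configurations, along sequences $g_n$ that escape the finite set $\{g : x(g) \neq y(g)\}$ in a controlled way. One-endedness makes complements of finite sets connected; sub-exponential divergence supplies paths around growing balls whose length is sub-exponential in the radius; and the strong $a$-specification on $\Delta_X$ (for $a$ with (SDT)) lets me interpolate between $x$ and $y$ through $\Delta_X$-configurations of controlled combinatorial complexity. Combining these with the H\"older bound on $c$, the telescoped sequence becomes Cauchy in the complete bi-invariant metric on $H$; bi-invariance is needed to make the limit path-independent and to secure the key identities $\Phi(x,y)\Phi(y,z) = \Phi(x,z)$ together with the $G$-equivariance of $\Phi$ relative to $c$. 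Setting $b(x) := \Phi(x_0, x)$ on $[x_0]_{\Delta_X}$, uniform continuity of $c$ on the compact space $X$ combined with density extends $b$ to all of $X$, and the cocycle identity evaluated along $\{g x_0\}$ then yields the required homomorphism $\rho$.

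For the last sentence of the theorem, when $a \in G$ is undistorted so that $|a^n|_S$ grows at least linearly, iterating the cohomological equation along $a^n$-orbits while using H\"older control on $c$ converts the continuity modulus of $b$ into a power-law modulus, yielding H\"older regularity. The principal obstacle will be the Cauchy estimate defining $\Phi$: one must simultaneously balance the combinatorial length of the divergence-style bypass paths, the number of homoclinic interpolation steps furnished by specification, and the H\"older decay rate of $c$, so that they assemble into an absolutely summable bound. The sub-exponential divergence and (SDT) hypotheses are tuned precisely so that this balance is possible; verifying path-independence of the limit---where bi-invariance of the metric on $H$ enters crucially---is the subtlest point.
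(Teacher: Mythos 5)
Your outline reproduces the general architecture (a homoclinic holonomy defined as a limit of $c(g_n,x)^{-1}c(g_n,y)$, made Cauchy by the H\"older bound, made well defined by one-endedness and divergence, made continuous at the diagonal by specification, and upgraded to H\"older by undistortion), but it has a genuine gap at the final step. The holonomy construction only yields a map $b$ with $x\mapsto b(gx)^{-1}c(g,x)b(x)$ constant for $g$ in the subgroup $G_\infty'$ generated by the (SDT) elements: the limit must be taken along powers $g^n$ of a fixed (SDT) element $g$, telescoped via the cocycle identity into $\prod_{j}f(g^jx)^{-1}$ so that the Cauchy estimate is $C\sum_j r^{\rho_g(j)-N-1}$, which is summable precisely by (SDT). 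Since $G_\infty'$ need not be all of $G$, your closing sentence (``the cocycle identity evaluated along $\{gx_0\}$ then yields $\rho$'') does not produce the cohomological equation for arbitrary $g\in G$. The paper closes this gap by showing $G_\infty'$ is an infinite \emph{normal} subgroup (Lemma \ref{compare lemma 8 in CJ}) and then invoking the topological-mixing argument of \cite[Lemma 1]{CJ} to extend from an infinite normal subgroup to $G$. Tellingly, topological mixing is never used in your plan, which signals that this step is missing.

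Two secondary misattributions are worth flagging. First, the independence of the limit from the choice of (SDT) element (your ``path-independence'') is exactly where one-endedness and sub-exponential divergence enter: one compares $c(g^n,x)^{-1}c(g^n,y)$ with $c(h^m,x)^{-1}c(h^m,y)$ by writing $g^{-n}=h^{-m}s_1^{-1}\cdots s_k^{-1}$ along a path avoiding a ball of radius comparable to $\min\{\ell_S(g^n),\ell_S(h^m)\}/2$, so that $k\leq Div_X(\ell_S(g^{-n}h^m))$ and the H\"older decay $r^{M}$ beats the sub-exponential path length; bi-invariance of the metric only enters in the routine telescoping estimates, not as the crux of path-independence. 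Second, specification is not needed to make the defining sequence Cauchy (that is (SDT) alone); it is needed to prove $c_f^{(g),+}(x,x')\to e_H$ as $(x,x')\to\Delta$, which is what lets $b$ extend continuously from the dense homoclinic class and, when $g$ is undistorted so that $\rho_g^{-1}(4R)\lesssim R$, gives the geometric modulus making $b$ H\"older. For that argument you need \emph{both} holonomies $c_f^{(g),+}$ and $c_f^{(g),-}$ and their equality (the analogue of \cite[Corollary 2]{CJ}), since specification produces a point $y$ with $(x,y)\in\Delta^+(g,R)$ and $(x',y)\in\Delta^-(g,R)$, each controlling a different one of the two limits; this equality is absent from your plan.
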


From this theorem, we deduce the following:
\begin{cor}\label{corollary on the membership of G_h}
Let $G$ be a finitely generated infinite group. If $G$ has one end, undistorted elements and its divergence function grows sub-exponentially. Then $G\in\cG_H$.
\end{cor}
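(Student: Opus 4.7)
The strategy is to apply Theorem \ref{main thm in 2nd approach} with $X$ equal to the full shift $A^G$ and then invoke its concluding sentence, which produces a H\"{o}lder continuous transfer map once $G$ has an undistorted element. This is precisely what membership in $\cG_H$ demands, so the task reduces to verifying that the three hypotheses (1)--(3) of Theorem \ref{main thm in 2nd approach} hold for the full-shift action $\sigma: G \curvearrowright A^G$ under the corollary's assumptions.

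Hypothesis (1) is immediate from the assumptions of the corollary. For hypothesis (2), the idea is that an undistorted element should automatically witness the (SDT) property: undistortedness of $a \in G$ means $|a^n|_S \asymp n$, which is the slowest possible word-length growth for an infinite-order element, so any reasonable formulation of ``slow distortion'' should then be satisfied in the strongest form. The precise verification is a definitional check against Section \ref{def of SDT groups}. Hypothesis (3) splits into two parts: topological mixing of the full shift, which is standard whenever $G$ is infinite (and our $G$ is infinite since it contains an undistorted element), and strong $a$-specification of the homoclinic relation $\Delta_{A^G}$ for every $a$ with (SDT). Since elements of $\Delta_{A^G}$ are pairs of configurations agreeing off a finite subset of $G$, strong $a$-specification can be established by direct construction: given a finite list of prescribed homoclinic pairs, the translates $a^{n_i} F$ of a common finite support $F$ are pairwise disjoint for a lacunary enough choice of integers $n_i$, so one can paste the prescribed patches independently into a single configuration.

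The main delicate step is the quantitative matching in hypothesis (3): one must align the formal definition of strong $a$-specification (with its uniformity and decay constants from Section \ref{section on specification property}) with the pasting above, and in particular verify that the rate at which the translates of $F$ spread out along $\langle a \rangle$ is fast enough. This is precisely where having $a$ undistorted, rather than merely of infinite order, provides the linear separation that is crucial for the H\"{o}lder upgrade of the transfer map in the last clause of Theorem \ref{main thm in 2nd approach}. Once (3) is verified, the theorem directly yields, for every $H \in \mathscr{G}_{inv}$ and every H\"{o}lder continuous cocycle $c: G \times A^G \to H$, a H\"{o}lder cohomology to a group homomorphism, which is exactly the definition of $G \in \cG_H$.
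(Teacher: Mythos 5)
Your overall route is the same as the paper's: Corollary \ref{corollary on the membership of G_h} is obtained by applying Theorem \ref{main thm in 2nd approach} to the full shift $A^G$, and your checks of hypotheses (1) and (2) are fine (undistortedness of $a$ gives $\rho_a(n)\geq\lambda n$ by Proposition \ref{equivalence of undistorted g, linear (inverse) distortion function}, whence $\sum_{i}r^{\rho_a(i)}<\infty$ for all $0<r<1$, i.e.\ $a$ is an (SDT) element). The gap is in hypothesis (3), which is where all the content of the corollary sits; the paper isolates it as Lemma \ref{full shifts have strong specification}. The construction you sketch --- pasting a finite list of prescribed patches on pairwise disjoint translates $a^{n_i}F$ of a fixed finite set, chosen lacunary --- is the classical specification property for subshifts, but it is not what Definition \ref{D-specification} asks for. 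Strong $a$-specification requires: given \emph{two} points $x,x'\in\Delta_X(\bar x)$ agreeing on the ball $B(N)$ with $N=\lceil s'\ell_S(a)\rho_a^{-1}(4R)+2R+t'\rceil$, produce a single $y\in\Delta_X(\bar x)$ agreeing with $x$ on the entire forward cone $\bigcup_{k\geq 0}a^{k}B([r_k]+R)$ and with $x'$ on the entire backward cone $\bigcup_{k\geq 0}a^{-k}B([r_k]+R)$, where the radii $[r_k]+R$ grow with $k$. These two cones are infinite and do overlap; the whole point of Lemma \ref{cone intersection is small} is that their intersection is contained in $B(\ell_S(a)\rho_a^{-1}(4R)+2R)\subseteq B(N)$, where $x$ and $x'$ coincide, so that for the full shift one may consistently define $y$ to be $x$ on the forward cone, $x'$ on the backward cone, and $\bar x=(0)_G$ elsewhere. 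This cone-overlap estimate (which uses the choice $r_k=\rho_a(k)/4$ with ratio strictly below $1/2$) is exactly the step your ``quantitative matching'' defers, and it cannot be replaced by disjointness of finitely many translates of a fixed finite support.

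A secondary misattribution: undistortedness of $a$ is not what makes the specification work --- Lemma \ref{full shifts have strong specification} needs only that $a$ is an (SDT) element, and Lemma \ref{cone intersection is small} only that $a$ has infinite order. Undistortedness enters later, in Lemma \ref{compare lemma 6 in CJ}, where $\rho_a^{-1}(4R)\leq 4R/\lambda$ turns $N$ into a linear function of $R$ and hence the decay $r^R$ into a genuine H\"{o}lder estimate in $N$ for the transfer map.
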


We organize our paper as follows. In section 2, we review definitions and elementary properties of divergence functions, distortion property, undistorted elements and translation numbers. In section 3, we review H\"{o}lder continuous cocycle, homoclinic equivalence relation for shifts and also introduce a modified version of specification property for shifts to deal with rigidity of H\"{o}lder continuous cocycles. In this section, we also establish a key lemma about construction of invariant holonomies for undistorted elements.   
In section 4, we prove our main theorem by combining our techniques in \cite{CJ} with our sub-exponential growth rate of divergence functions to get a universal solution of continuous transfer map and show that it is indeed H\"{o}lder continuous whenever we have undistorted elements. Finally, using ideas in \cite{BDM,bd,dms,ds} we illustrate that our class $\cG_H$ contains wide groups having undistorted elements and one-ended groups with strong thick of finite orders. This implies that irreducible uniform lattices of most of higher rank connected semisimple Lie groups, mapping class groups of $g$-genus surfaces with $p$-punches, $g\geq 2,p\geq 0$; R. Thompson groups $F,T,V$; $Aut(F_n)$, $Out(F_n)$, $n\geq3$; certain (2-dimensional) Coxeter groups; and one-ended right-angled Artin groups are in $\cG_H$. 

\textbf{Acknowledgement:} N.-P. Chung was supported by the NRF grant funded
by the Korea government (MSIP) (No. NRF-2016R1D1A1B03931922). Y. Jiang is grateful to his advisor Prof. Hanfeng Li for his support in Buffalo, where part of this paper was written. Both authors were supported by Science Research Center Program through NRF funded by the Ministry of Science, ICT \& Future Planning (No. NRF-2016R1A5A1008055). We thank Jason Behrstock for informing us examples in \cite {OOS} of finitely generated groups whose divergences are sub-exponential but non-polynomials. We would also like to thank Mark Sapir for helpful comments, specially providing us more information of R. Thompson groups $F,T,V$. 
\section{Preliminaries}

In this section, we recall some definitions and set up notations. 
Throughout this paper, $G$ is a finitely generated, non-torsion group unless otherwise specified.

\subsection{Divergence functions}\label{section on divergence}
Let $(X, d)$ be a geodesic metric space, typically, we would take $X$ to be a Cayley graph of $G$ endowed with the word metric with respect to some symmetric generating set. The main reference for divergence functions we use are \cite[section 3]{bd} and \cite[section 3.1]{dms}. Note that there are several definitions of divergence functions, here we use \cite[Definition 3.1]{dms} by taking $\delta=\frac{1}{2}$ and $\gamma=2$ since these two constants would guarantee these definitions are equivalent by \cite[Corollary 3.12(2)]{dms} when $X$ is a Cayley graph of a finitely generated one-ended group.

We define the \emph{divergence of a pair of points $a, b\in X$ relative to a point $c\not\in\{a, b\}$} as the length of the shortest path from $a$ to $b$ avoiding a ball around $c$ of radius $d(c, \{a, b\})/2-2$, where $d(c, \{a, b\}):=\min\{d(c, a), d(c, b)\}$. If no such path exists, then we define the divergence to be infinity. The \emph{divergence of a pair of points $a, b$}, $Div(a, b)$, is the supremum of the divergences of $a, b$ relative to all $c\in X\setminus \{a, b\}$.

\emph{The divergence of $X$} is given by $Div_X(n):=\max\{Div(a, b)~|~ d(a, b)\leq n\}$. We say $Div_X(\cdot)$ \emph{grows sub-exponentially} if $\lim_{n\to\infty}Div_X(n)r^n=0$ for all $0<r<1$.

One can check $\lim_{n\to\infty}Div_X(n)r^n=0$ for all $0<r<1$ iff $\lim_{n\to\infty}\log(Div_X(n))/n=0$. And if $Div_X(\cdot)$ is a polynomial, then it grows sub-exponentially.

Recall that given two non-decreasing functions $f, g: \R_+\to \R_+$, we write $f\preceq g$ if there exists a constant $C\geq 1$ for which $f(x)\leq Cg(Cx+C)+Cx+C$ for all $x\in \R_+$; we set $f\asymp g$ if both $f\preceq g$ and $g\preceq f$.

It is easy to check that the $\asymp$-equivalence class of the divergence functions is a quasi-isometry invariant when $X=Cayley(G)$ and $d$ is a word metric. It is also clear that the property of having sub-exponential growing divergence function is preserved under quasi-isometry. 

\subsection{Distortion and (SDT) property} \label{(SDT) property}
We first recall the definition of distortion functions, which is needed for the definition of (SDT) property.

\begin{definition}[cf. \cite{gromov_book on asymptotic invariants}]
Let $K$ be a subgroup of a group $G$, where $K$ and $G$ are generated by finite sets $T$ and $S$, respectively. Then the \emph{distortion function} of $K$ in $G$ is defined as 
$\Delta_{K}^G: \R_{+}\to \R_{+},~ x\mapsto \max\{\ell_T(k):~ k\in K, \ell_S(k)\leq x\}.$ And we also define the \emph{compression function} $\rho_K^G$ as follows: $\rho_{K}^G: \R_{+}\to \R_{+},~ x\mapsto \min\{\ell_S(k):~ k\in K, \ell_T(k)\geq x\}.$
\end{definition}

For later use, we would frequently work with $\rho_K^G$ rather than $\Delta_K^G$. Roughly speaking, one can think of $\rho_K^G$ as the inverse function of $\Delta_K^G$ (see the proposition below). Note that we could not define the inverse function of $\Delta_K^G$ directly since $\Delta_K^G$ and $\rho_K^G$ may not be strictly monotone. However, we still use $(\rho_K^G)^{-1}(c)$ to denote the number $\sup\{\lambda>0: \rho_K^G(\lambda)\leq c\}$. This makes sense by Proposition \ref{property of distortion functions} (1) below. 

Given two non-decreasing functions $f, g: \R_+\to \R_+$, we write $f\approx g$ if there exists $c>0$ such that for $f(x)\leq cg(cx)$ and $g(x)\leq cf(cx)$ for all $x\in \R_+$. Let us record properties of these functions for later use. We leave the proof as exercises.

\begin{proposition}\label{property of distortion functions}
(1) Both $\Delta_K^G$ and $\rho_K^G$ are constant on open intervals $(n, n+1)$ for all $n\in \mathbb{N}$, non-decreasing and do not depend on the choice of finite generating sets $T$ and $S$ up to $\approx$ and hence also up to $\asymp$.

(2) $\Delta_K^G(\ell_S(k))\geq \ell_T(k)$ and $\rho_K^G(\ell_T(k))\leq \ell_S(k)$ for all $k\in K$. In particular, if $K=\langle g\rangle$ and $g$ is of infinite order, then $\Delta_K^G(\ell_S(g^i))\geq i$ and $\rho_K^G(i)\leq \ell_S(g^i)$ for all $i\geq 1$ by taking $T=\{g^{\pm}\}$.

(3) $\Delta_K^G(\rho_K^G(x)-1)<x<\rho_K^G(\Delta_K^G(x)+1)$ and $(\rho_K^G)^{-1}(x)\leq \Delta_k^G(x)$ for all $x\in \R_+$.

(4) If $K$ is infinite, then $\Delta_K^G$ is at least and $\rho_K^G$ is at most linear up to $\approx$. 

(5) If $K=\langle g\rangle$, $T=\{g^{\pm}\}$, where $g\in G$ has infinite order and $S=S^{-1}$. Then $\rho_K^G$ is subadditive, i.e. $\rho_K^G(x+y)\leq \rho_K^G(x)+\rho_K^G(y)$; $\Delta_K^G$ is superadditive, i.e. $\Delta_K^G(x+y)\geq \Delta_K^G(x)+\Delta_K^G(y)$ for all $x, y\in \R_+$. 
\end{proposition}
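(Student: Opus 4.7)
The plan is to dispatch each of the five parts directly from the definitions; none requires substantial new ideas, but care is needed with the integer-valued parameters and with the direction of inequalities. For (1), constancy of $\Delta_K^G$ and $\rho_K^G$ on the intervals $(n,n+1)$ holds because $\ell_S$ and $\ell_T$ take integer values, so the defining sets $\{k\in K:\ell_S(k)\leq x\}$ and $\{k\in K:\ell_T(k)\geq x\}$ are constant for $x$ in any such interval; monotonicity is immediate since enlarging $x$ enlarges the admissible set for $\Delta_K^G$ and imposes a stricter lower bound for $\rho_K^G$. Independence of the finite generating sets up to $\approx$ reduces to the bi-Lipschitz equivalence of word metrics under change of generators, which, when substituted into the extremal definition, affects only the multiplicative and argument-scaling constants. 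Part (2) is immediate: the element $k$ itself lies in the admissible set defining both $\Delta_K^G(\ell_S(k))$ and $\rho_K^G(\ell_T(k))$.

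For (3), the inequality $\Delta_K^G(\rho_K^G(x)-1)<x$ follows by observing that any $k$ with $\ell_S(k)\leq \rho_K^G(x)-1<\rho_K^G(x)$ forces $\ell_T(k)<x$: otherwise the definition of $\rho_K^G$ would yield $\ell_S(k)\geq \rho_K^G(x)$. Taking the maximum over such $k$ gives the strict bound, and the right-hand inequality is symmetric. For $(\rho_K^G)^{-1}(x)\leq \Delta_K^G(x)$, note that $\ell_S$ being integer-valued ensures that the infimum defining $\rho_K^G(\lambda)$ is attained, so $\rho_K^G(\lambda)\leq x$ produces some $k\in K$ with $\ell_T(k)\geq \lambda$ and $\ell_S(k)\leq x$, whence $\Delta_K^G(x)\geq \lambda$; taking the supremum over admissible $\lambda$ completes the step.

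For (4), set $C_0:=\max_{t\in T}\ell_S(t)<\infty$. Substituting $S$-words of length $\leq C_0$ for each generator in a minimal $T$-expression of $k$ yields $\ell_S(k)\leq C_0\,\ell_T(k)$ for all $k\in K$. Since $K$ is infinite, one may select $k_n\in K$ with $\ell_T(k_n)=n$ for every $n\in\mathbb{N}$. Feeding these into the definitions gives $\Delta_K^G(C_0 n)\geq n$ and $\rho_K^G(n)\leq C_0 n$, which are precisely the asserted linear lower and upper bounds up to $\approx$.

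For (5), I use the cyclic structure of $K=\langle g\rangle$ together with $S=S^{-1}$: symmetry of $S$ yields $\ell_S(g^i)=\ell_S(g^{-i})$, so I may always choose extremizing powers with positive exponent. Picking $a,b\geq 1$ realizing $\rho_K^G(x)$ and $\rho_K^G(y)$ with $a\geq x$, $b\geq y$, the product $g^{a+b}=g^a g^b$ satisfies $\ell_T(g^{a+b})=a+b\geq x+y$ and $\ell_S(g^{a+b})\leq \rho_K^G(x)+\rho_K^G(y)$, proving subadditivity. The superadditivity of $\Delta_K^G$ is entirely parallel, now choosing $g^a,g^b$ that maximize $\ell_T$ subject to $\ell_S(g^a)\leq x$ and $\ell_S(g^b)\leq y$ respectively. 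The only real subtlety across the whole proposition lies in this sign-alignment step: without the symmetry hypothesis $S=S^{-1}$ one could not guarantee $|a+b|=a+b$, and the telescoping argument would break.
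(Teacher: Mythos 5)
Your argument is correct and complete; the paper itself offers no proof to compare against, since it explicitly leaves this proposition as an exercise, and your direct verification from the definitions (integer-valuedness and finiteness of balls for (1)--(4), and the sign-alignment via $S=S^{-1}$ together with subadditivity of $\ell_S$ along powers of $g$ for (5)) is exactly the intended routine argument. The only points worth a passing remark are the degenerate cases $x=0$ in (3) and (5), and the fact that the extrema in the definitions are attained because word lengths are non-negative integers and balls in $G$ are finite -- both of which you essentially address.
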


\begin{definition}\label{def of SDT groups}
Let $G$ be a finitely generated infinite group. We say $G$ has \emph{slow growth distortion property (abbreviated as (SDT) property)} if there exists some non-torsion element $g$ in $G$ such that $I(r):=\sum_{i=1}^{\infty}r^{\rho_{\langle g\rangle}^{G}(i)}<\infty$ for all $0<r<1$. And we also say such a $g$ has \emph{(SDT) property} or $g$ is an \emph{(SDT) element}. 
\end{definition}

We record the following facts for later use.

\begin{proposition}\label{property related to def of slow growth distortion}
(1) If a function $f: \R_+\to\R_+$ is non-decreasing and continuous except at at most countably many points. Then for every $r>0$, $\sum_{i=1}^{\infty}r^{f(i)}<\infty$ if and only if $\int_1^{\infty}r^{f(x)}dx<\infty$. 

(2) Let $f, g: \R_+\to\R_+$ be functions as in (1). If $f\approx g$, then $\sum_{i=1}^{\infty}r^{f(i)}<\infty$ for all $r\in (0, 1)$ if and only if $\sum_{i=1}^{\infty}r^{g(i)}<\infty$ for all $r\in (0, 1)$.

(3) If $f(x)=x^{1/m}$, where $m\geq 1$, then $\sum_{i=1}^{\infty}r^{f(i)}<\infty$ for all $r\in (0, 1)$.

(4) If there exist some $c>0, m\geq 1$ such that $\Delta_K^G(x)\leq c(cx)^m$ for all $x>0$, then $I(r):=\sum_{i=1}^{\infty}r^{\rho_K^{G}(i)}<\infty$ for all $r\in (0, 1)$.

(5) More generally, if $\Delta_K^G(\cdot)$ grows sub-exponentially, i.e. $\lim_{n\to\infty}\log(\Delta_K^G(n))/n=0$, then $I(r):=\sum_{i=1}^{\infty}r^{\rho_K^{G}(i)}<\infty$ for all $0<r<1$.
\end{proposition}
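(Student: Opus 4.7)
First, I would prove (1)--(3) in order. For (1), the case $r\geq 1$ is trivial since then $r^{f(x)}\geq 1$ and both sides diverge, while for $r\in(0,1)$ this is the classical integral test applied to the non-increasing, Riemann-integrable function $x\mapsto r^{f(x)}$: the bracket
$$r^{f(i+1)}\;\leq\;\int_i^{i+1}r^{f(x)}\,dx\;\leq\; r^{f(i)}$$
sums over $i\geq 1$ to the desired equivalence. For (2), I pass to integrals via (1) and use the two defining inequalities of $f\approx g$. From $g(x)\leq cf(cx)$, setting $s:=r^c\in(0,1)$ yields
$$\int_1^\infty r^{g(x)}\,dx\;\geq\;\int_1^\infty s^{f(cx)}\,dx\;=\;\tfrac{1}{c}\int_c^\infty s^{f(u)}\,du,$$
so convergence of the $g$-integral for every $r\in(0,1)$ forces convergence of the $f$-integral for every $s\in(0,1)$; the converse is symmetric. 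For (3), (1) reduces the claim to $\int_1^\infty r^{x^{1/m}}\,dx<\infty$, and the substitution $u=x^{1/m}$ transforms this into $m\int_1^\infty u^{m-1}r^u\,du$, which is manifestly finite.

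Parts (4) and (5) rest on extracting a lower bound on $\rho_K^G(i)$ from the given upper bound on $\Delta_K^G$. Proposition \ref{property of distortion functions}(3) gives $(\rho_K^G)^{-1}(y)\leq \Delta_K^G(y)$; taking $y=\rho_K^G(i)$ and noting that $\lambda=i$ lies in the set whose supremum defines $(\rho_K^G)^{-1}(\rho_K^G(i))$, one obtains $i\leq \Delta_K^G(\rho_K^G(i))$. For (4), substituting $\Delta_K^G(y)\leq c(cy)^m$ into this inequality produces $\rho_K^G(i)\geq c'\,i^{1/m}$ with $c'=c^{-(1+1/m)}$; with $s:=r^{c'}\in(0,1)$ the summability of $\sum r^{\rho_K^G(i)}=\sum s^{i^{1/m}/c''}$ then follows from (3). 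For (5), taking logarithms gives $\log i\leq \log(\Delta_K^G(\rho_K^G(i)))$; since $K$ is infinite one has $\rho_K^G(i)\to\infty$ (any fixed $S$-ball in $G$ contains only finitely many elements of $K$), and the sub-exponential growth assumption $\log(\Delta_K^G(n))/n\to 0$ combined with the preceding inequality yields $\rho_K^G(i)>\log i/\varepsilon$ for arbitrary $\varepsilon>0$ and all sufficiently large $i$; choosing $\varepsilon<|\log r|/2$ then makes $r^{\rho_K^G(i)}<i^{\log r/\varepsilon}$ a summable tail.

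The main obstacle I anticipate is not any single calculation but the bookkeeping in (2) and (4)/(5): one must carefully select which of the two inequalities defining $f\approx g$ (respectively, which direction of the inverse-function-type inequality in Proposition \ref{property of distortion functions}(3)) is useful in a given implication, and absorb the unavoidable dilation constant $c$ by replacing the base $r$ with $r^c$, which is permitted precisely because the hypotheses quantify over all $r\in(0,1)$.
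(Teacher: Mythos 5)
Your proof is correct and follows essentially the same route as the paper's (which leaves (1)--(2) as exercises and only sketches (3)--(5)): reduce (4) and (5) to a lower bound on $\rho_K^G$ via Proposition \ref{property of distortion functions}(3) and then invoke (3). The only cosmetic differences are that you establish (3) by the integral test and the substitution $u=x^{1/m}$ where the paper counts the indices $i$ with $k\le i^{1/m}<k+1$, and that in (4) you use the inequality $(\rho_K^G)^{-1}(x)\le\Delta_K^G(x)$ where the paper uses $x<\rho_K^G(\Delta_K^G(x)+1)$; both yield the same lower bound on $\rho_K^G(i)$ of order $i^{1/m}$.
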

\begin{proof}
The proof of (1) and (2) are left as exercises.

To prove (3), observe that for any $k\geq 1$, $\#\{i\in \mathbb{N}: k\leq f(i)<k+1\}=\#\{i\in\mathbb{N}: k\leq i^{1/m}<k+1\}\leq (1+k)^m-k^m+1$.
Hence, $\sum_{i=n}^{\infty}r^{f(i)}\leq \sum_{k\geq f(n)}((1+k)^m-k^m+1)r^k\to 0$ as $n\to\infty$.

To prove (4), by Proposition \ref{property of distortion functions} (3), we have $x<\rho_K^G(\Delta_K^G(x)+1)\leq \rho_K^G(c(cx)^m+1)$. Hence, $\rho_K^G(y)\geq ((y-1)/c^{m+1})^{1/m}$. Then apply part (3).  

The proof of (5) is similar to the above proof of (4).
\end{proof}

In this paper, we are mainly interested in the case $K=\langle g\rangle$ for some non-torsion element $g$ in $G$. For later use, when there is no danger of confusion, we would simply write $\rho_g(x)$ (respectively, $\Delta_g(x)$) for $\rho_{\langle g\rangle}^G(x)$ (respectively, $\Delta_{\langle g\rangle}^G(x)$) with $T=\{g^{\pm}\}$.

\subsection{Undistorted elements and translation numbers}\label{section on undistorted elements}

To give explicit examples of groups with (SDT) property, let us first look at the special case when there exists some non-torsion element $g$ with $\Delta_g(\cdot)$ grows at most linearly. In general, recall the following well-known definition.

\begin{definition}
Let $G$ be a finitely generated infinite group and $K$ be a finitely generated subgroup of $G$. We say $K$ is \emph{undistorted} in $G$ if there exists some constant $c>0$ such that $\Delta_K^G(n)\leq cn$ for all $n\geq 1$.
\end{definition}
Another way to characterize undistorted elements is to use translation numbers that we recall below.

Fix any finite generating set $S$ of $G$, let $g\in G$ be an element of infinite order. As $\ell_S(g^{m+n})\leq \ell_S(g^m)+\ell_S(g^n)$ for every $m,n\in \N$, the limit $\lim_{n\to\infty}\ell_S(g^n)/n$ exists and equals to $\inf_{n\in \N}\ell_S(g^n)/n$ \cite[Theorem 4.9]{walters}. The \textit{translation number} of $g$ with respect to $S$ is defined by $\tau_S(g):=\lim_{n\to \infty}\ell_S(g^n)/n$ \cite[Section 6]{GS}. Some elementary properties of translation numbers are established in \cite[Lemma 6.3]{GS}. An element $g\in G$ is called  \textit{an undistorted element} if $\tau_S(g)>0$ for some (and every) finite generating set $S$ of $G$ \cite[Definition 1.1]{FH}.
We say that a finitely generated group $G$ satisfies \emph{property (UD)} if there exists an element $g\in G$ and a finite generating set $S$ such that $\ell_S(g^k)\geq k$ for every $k\in \N$. As $\tau_S(g^m)=|m|\tau_S(g)$ for every $g\in G, m\in \Z$, it is clear that a group $G$ has property (UD) iff it has an undistortion element $g$, i.e $\tau_S(g)\neq 0$ for some (and every) finite generating set $S$ of $G$.

We have the following proposition.
\begin{proposition}\label{equivalence of undistorted g, linear (inverse) distortion function}
Let $G$ be a finitely generated infinite group with a finite symmetric  generating set $S$ and $g$ be an element in $G$ with infinite order. The following statements are equivalent.\\
(1) $\langle g\rangle$ is undistorted in $G$, i.e. $\Delta_g(n)\leq cn$ for some $c>0$ and all $n\geq 1$.\\
(2) $g$ is an undistorted element in $G$, i.e. $\tau_S(g)>0$.\\
(3) There exists some positive constant $\lambda$ such that $\ell_S(g^n)\geq \lambda n$ for all $n\geq 1$.\\
(4) There exist some positive constants $\lambda, \mu$ such that $\ell_S(g^n)\geq \lambda n-\mu$ for all $n\geq 1$.\\
(5) There exists some positive constant $\lambda$ such that $\rho_g(n)\geq \lambda n$ for all $n\geq 1$.\\
(6) There exists some positive constants $\lambda, \mu$ such that $\rho_g(n)\geq \lambda n-\mu$ for all $n\geq 1$.\\
\end{proposition}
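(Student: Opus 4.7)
My plan is a chain of short implications. The easy observations are: (3) $\Rightarrow$ (4) and (5) $\Rightarrow$ (6) are tautological (set $\mu = 0$), and the definitional identity $\rho_g(x) = \min\{\ell_S(g^m) : |m| \geq x\}$ together with the trivial bound $\rho_g(n) \leq \ell_S(g^n)$ (take $m = n$) immediately yields (3) $\Leftrightarrow$ (5) and (4) $\Leftrightarrow$ (6): the forward direction just applies the lower bound on $\ell_S(g^m)$ to every admissible $m$, and the reverse uses the pointwise bound at $m = n$.

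For (2) $\Leftrightarrow$ (3), I would invoke the infimum characterization $\tau_S(g) = \inf_{n \geq 1} \ell_S(g^n)/n$ recalled in the text via Fekete's lemma: this turns $\tau_S(g) > 0$ into precisely ``$\ell_S(g^n) \geq \tau_S(g) \cdot n$ for all $n \geq 1$''. For (4) $\Rightarrow$ (2), letting $n \to \infty$ in $\ell_S(g^n)/n \geq \lambda - \mu/n$ gives $\tau_S(g) \geq \lambda > 0$, and then (2) $\Rightarrow$ (3) closes the loop between (2), (3), (4).

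Finally, for (1) $\Leftrightarrow$ (3): if $\Delta_g(n) \leq cn$, then for any $m \geq 1$ the element $g^m$ is admissible in the maximum defining $\Delta_g(\ell_S(g^m))$, hence $m \leq \Delta_g(\ell_S(g^m)) \leq c\,\ell_S(g^m)$, which is (3) with $\lambda = 1/c$. Conversely, (3) forces $\ell_S(g^m) \leq n \Rightarrow m \leq n/\lambda$, so $\Delta_g(n) \leq n/\lambda$, yielding (1). (Alternatively one can read this off from Proposition \ref{property of distortion functions}(3) combined with the already-proved (3) $\Leftrightarrow$ (5).) There is no substantive obstacle; the whole statement is a dictionary between three equivalent encodings of ``$\ell_S(g^n)$ grows at least linearly in $n$'' (as a max, a min, or a limit), with the additive $\mu$-slack absorbed into the subadditive limit identity for the translation number.
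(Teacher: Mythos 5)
Your proposal is correct and follows essentially the same route as the paper: the paper likewise reduces everything to elementary unwinding of the definitions of $\Delta_g$, $\rho_g$ and the subadditive (Fekete) characterization $\tau_S(g)=\inf_n \ell_S(g^n)/n$, detailing only $(1)\Rightarrow(2)$ via Proposition \ref{property of distortion functions}(2)(3) and declaring the remaining implications clear. Your choice to link $(1)$ directly to $(3)$ rather than to $(2)$ is an inessential reorganization of the same argument.
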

\begin{proof}
$(1)\Rightarrow (2)$. From Proposition \ref{property of distortion functions} (2) (3), we deduce $n<\rho_g(\Delta_g(n)+1)\leq \rho_g(cn+1)\leq \ell_S(g^{cn+1})$. Hence $\frac{\ell_S(g^{cn+1})}{cn+1}\geq\frac{n}{cn+1}$. Then take $\lim$ on both sides.

$(2)\Rightarrow (1), (2)\Rightarrow(3)\Rightarrow(4)\Rightarrow(2), (3)\Leftrightarrow(5)\Leftrightarrow(6)$. All these are clear.
\end{proof}

\begin{lem}
\label{compare lemma 8 in CJ}
Let $G$ be a finitely generated infinite group with (SDT) property, $G_{\infty}'$ be the subgroup generated by all elements with (SDT) property in $G$ and $G_{\infty, 1}'$ be the subgroup generated by all elements that are undistorted, then both $G_{\infty}'$ and $G_{\infty, 1}'$ are infinite normal subgroup of $G$.
\end{lem}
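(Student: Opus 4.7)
The plan is to verify the two assertions by handling infiniteness and normality separately, with essentially parallel arguments for $G_{\infty}'$ and $G_{\infty,1}'$. For infiniteness, the hypothesis that $G$ has (SDT) property produces a non-torsion (SDT) element $g$, so $\langle g\rangle\subseteq G_{\infty}'$ is already infinite; for $G_{\infty,1}'$, I would invoke the standing assumption of the main theorem that $G$ admits an undistorted element, and observe that such an element has infinite order because $\tau_S(g)>0$ forces $\ell_S(g^n)\to\infty$, so $\langle g\rangle\subseteq G_{\infty,1}'$ is infinite.

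The crux is then normality, for which I would reduce everything to showing that the set of (SDT) elements of $G$ and the set of undistorted elements of $G$ are each closed under conjugation; the subgroups they generate are then normal automatically. Fixing a finite symmetric generating set $S$ of $G$, the key estimate is the elementary conjugation inequality
\begin{equation*}
\bigl|\ell_S(hg^jh^{-1})-\ell_S(g^j)\bigr|\leq 2\ell_S(h)\qquad(h,g\in G,\ j\in\Z),
\end{equation*}
which follows from $(hgh^{-1})^j=hg^jh^{-1}$ together with the triangle inequality. Taking $T=\{(hgh^{-1})^{\pm}\}$ so that $\ell_T((hgh^{-1})^j)=|j|$ and plugging this into the definition of $\rho$ yields
\begin{equation*}
\rho_{\langle hgh^{-1}\rangle}^G(i)\geq \rho_{\langle g\rangle}^G(i)-2\ell_S(h)
\end{equation*}
for every $i\geq 1$, so that for every $r\in(0,1)$
\begin{equation*}
\sum_{i\geq 1} r^{\rho_{\langle hgh^{-1}\rangle}^G(i)} \leq r^{-2\ell_S(h)}\sum_{i\geq 1} r^{\rho_{\langle g\rangle}^G(i)} < \infty,
\end{equation*}
which shows that $hgh^{-1}$ is (SDT) whenever $g$ is. In the undistorted case I would divide the same displayed inequality by $n$ and pass to the limit to get $\tau_S(hgh^{-1})=\tau_S(g)>0$, so $hgh^{-1}$ is undistorted whenever $g$ is. Combined with the infiniteness above, this completes the proof.

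I do not anticipate a real obstacle: the entire argument rests on the crude but effective bound $|\ell_S(hg^jh^{-1})-\ell_S(g^j)|\leq 2\ell_S(h)$, and both the summability condition defining (SDT) and the positivity of the translation number are visibly insensitive to an additive constant in $\ell_S(g^n)$. The only mildly subtle point is interpretive: for $G_{\infty,1}'$ to be infinite one must read the hypothesis as tacitly including the existence of at least one undistorted element, which is exactly the setting in which this lemma will subsequently be applied.
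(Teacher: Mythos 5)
Your proof is correct and follows essentially the same route as the paper's: both rest on the conjugation bound $\ell_S(tg^nt^{-1})\geq \ell_S(g^n)-2\ell_S(t)$, hence $\rho_{tgt^{-1}}(n)\geq\rho_g(n)-2\ell_S(t)$, to show the (SDT) and undistortion properties are conjugation-invariant, with infiniteness coming from the existence of a single such non-torsion element. Your interpretive caveat about $G_{\infty,1}'$ needing at least one undistorted element is fair, and the paper's own one-line justification of infiniteness is no more explicit on this point.
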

\begin{proof}
By definition, any (SDT) element has infinite order, hence $G_{\infty}'$ and $G_{\infty, 1}'$ are infinite. Now, for any $h\in G$ and any non-torsion element $g$ in $G$, $\ell_S(tg^nt^{-1})\geq \ell_S(g^n)-2\ell_S(t)$, hence $\rho_{tgt^{-1}}(n)\geq \rho_{g}(n)-2\ell_S(t)$ for all $n\in \mathbb{N}$, where $S$ is any finite generating set of $G$. Hence $tgt^{-1}$ has (SDT) property (respectively, is undistorted) if $g$ has the same property, so $G_{\infty}'$ and $G_{\infty, 1}'$ are normal subgroups of $G$.
\end{proof}
\section{H\"{o}lder continuous cocycles, homoclinic equivalence relation and specification property for shifts}
\subsection{H\"{o}lder continuous cocycles and homoclinic equivalence relation}\label{section on Holder cocyles}
Let $(G, X)$ be a subshift, where $X\subset A^G$ for some finite set $A$, $S$ be a finite symmetric generating set of $G$. We write $B(n):=\{g\in G: \ell_S(g)\leq n\}$, where $\ell_S$ be the word length on $G$ induced by $S$. Following \cite[p.245]{Schmidt95}, a function $f:X\to H$ is called \emph{H\"{o}lder continuous} if there exist $C>0$ and $0<r<1$ such that for every $n\geq 0$, $d(f(x),f(y))\leq Cr^n$, for every $(x,y)\in X\times X$ with $x_{B(n)}=y_{B(n)}$. A cocycle $c: G\times X\to H$ is \emph{H\"{o}lder continuous} if $c(g,\cdot): X\to H$ is H\"{o}lder continuous for every $g\in G$. We say two H\"{o}lder continuous cocycles $c_1, c_2: G\times X\to H$ are \emph{H\"{o}lder cohomologous} if $c_1(g, x)=b(gx)^{-1}c_2(g, x)b(x)$ holds for all $g\in G$ and all $x\in X$ for some H\"{o}lder continuous map $b: X\to H$.  

Let us comment on this definition. In general, for two metric spaces $(X, d), (Y, d')$, a function $f: X\to Y$ is H\"{o}lder continuous if for some positive constants $c, \alpha$, $d'(f(x_1), f(x_2))\leq Cd(x_1, x_2)^{\alpha}$ holds for all $x_i$ in $X$. But this definition depends on the specific metric $d$ rather than the topology of $X$, which is not satisfactory when dealing with a subshift $(G, X)$. Thus we need a meaningful definition of  H\"{o}lder continuous map that is independent of the specific metric on $X$, such a definition is given in \cite[p.109]{katok_schmidt} for $\Z^d$-subshift. Choosing $d(x_1, x_2):=(1/2)^{\sup\{n: ~(x_1)_{B(n)}=(x_2)_{B(n)}\}}$ in \cite[p.109]{katok_schmidt}, one can check the definition in \cite[p.109]{katok_schmidt} is reduced to the one in \cite[p.245]{Schmidt95}. 

Note that in the definition of H\"{o}lder continuous cocycles, for different $g$, the associated $C, r$ may be different.

As in \cite{CJ}, we would use $\Delta_X$ to denote the homoclinic equivalence relation for a subshift $(G, X)$.

Let $f:X\to H$ be a H\"{o}lder continuous map. For every $n\geq 1$, we define maps 
\begin{align*}
c_f^{(g),+,(n)}(x,y) & =\Big(\prod_{j=0}^{n-1}f(g^jx)^{-1}
\Big)\Big(\prod_{j=0}^{n-1}f(g^jy)^{-1}\Big)^{-1},\\
c_f^{(g),-,(n)}(x,y) & =\Big(\prod_{j=1}^{n-1}f(g^{-j}x)
\Big)\Big(\prod_{j=1}^{n-1}f(g^{-j}y)\Big)^{-1}.
\end{align*}
\begin{lem}\label{limit exists in c_f^g, +}
Assume that $g$ is an (SDT) element in $G$, $H$ is complete w.r.t. a compatible bi-invariant metric $d$, e.g. $H\in\mathscr{G}_{inv}$. Then the maps $c_f^{(g),+},~ c_f^{(g),-}:\Delta_X\to H$ defined by
\begin{align*} 
c_f^{(g),+}(x,y):=\lim_{m\to \infty}c_f^{(g),+,(m)}(x,y),\\
c_f^{(g),-}(x,y):=\lim_{m\to \infty}c_f^{(g),-,(m)}(x,y),
\end{align*}
 for every $(x,y)\in \Delta_X$ are well defined and satisfy the cocycle condition 
\begin{align*} 
 c_f^{(g),+}(x,y)c_f^{(g),+}(y,z)=c_f^{(g),+}(x,z),\\
 c_f^{(g),-}(x,y)c_f^{(g),-}(y,z)=c_f^{(g),-}(x,z),
 \end{align*} 
 for every $(x,y),(y,z)\in \Delta_X$. \\
Furthermore, there exists $C'>0$ such that for every $N\in \mathbb{N}$, if $(x,y)\in \Delta_X$ with $x_h=y_h$ for every $h\notin B(N)$ then 
\begin{eqnarray}\label{estimate of partial sum}
d(c_f^{(g),+}(x,y),c_f^{(g),+,(m+{\rho_g}^{-1}(N))}(x,y))\leq C'\sum_{j=\rho_g^{-1}(N)+m}^{\infty}r^{\rho_g(j)}
\end{eqnarray} for every $m\geq 0$; Here, $r$ is the constant appeared when defining H\"{o}lder continuity of $f$.
\end{lem}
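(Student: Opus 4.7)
The plan is to reduce Cauchy-ness of the partial products defining $c_f^{(g),+}$ to a summability estimate: first using bi-invariance of $d$ to convert the group-valued problem into a scalar one, then applying H\"older continuity of $f$ together with the (SDT) hypothesis on $g$ to control the resulting series.

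The first step is a direct manipulation. For every $n \geq 1$, writing $P_n^y := \prod_{j=0}^{n-1} f(g^j y)^{-1}$, one computes
\[
c_f^{(g),+,(n)}(x,y)^{-1}\, c_f^{(g),+,(n+1)}(x,y) \;=\; P_n^y\, f(g^n x)^{-1}\, f(g^n y)\, (P_n^y)^{-1}.
\]
Since $d$ is bi-invariant, both left translation and conjugation are isometries, so
\[
d\bigl(c_f^{(g),+,(n)}(x,y),\, c_f^{(g),+,(n+1)}(x,y)\bigr) \;=\; d(f(g^n x), f(g^n y)).
\]
Consequently the partial products form a Cauchy sequence in $H$ as soon as $\sum_n d(f(g^n x), f(g^n y)) < \infty$.

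To bound the latter sum I would next measure how fast $g^j x$ and $g^j y$ come to agree on large balls. Given $(x, y) \in \Delta_X$ with $x_h = y_h$ for $h \notin B(N)$, one has $(g^j x)_h = (g^j y)_h$ whenever $g^{-j} h \notin B(N)$; by symmetry of $S$, $\ell_S(g^{-j} h) \geq \ell_S(g^{-j}) - \ell_S(h) = \ell_S(g^j) - \ell_S(h) \geq \rho_g(j) - n$ for $h \in B(n)$. Hence the two configurations agree on $B(n)$ for every $n \leq \rho_g(j) - N - 1$, and the H\"older assumption on $f$ yields
\[
d(f(g^j x), f(g^j y)) \;\leq\; C\, r^{\rho_g(j) - N - 1}
\]
for every $j > \rho_g^{-1}(N)$. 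The (SDT) hypothesis makes $\sum_j r^{\rho_g(j)}$ finite, so the partial products are Cauchy and converge in the complete group $H$ to a well-defined limit $c_f^{(g),+}(x, y)$. Summing the telescoping bounds then produces
\[
d\bigl(c_f^{(g),+}(x, y),\, c_f^{(g),+,(m + \rho_g^{-1}(N))}(x, y)\bigr) \;\leq\; \sum_{j \geq m + \rho_g^{-1}(N)} d(f(g^j x), f(g^j y)) \;\leq\; C' \sum_{j \geq m + \rho_g^{-1}(N)} r^{\rho_g(j)},
\]
with $C'$ absorbing the $N$-dependent factor $C r^{-N-1}$ together with the finitely many boundary terms where $\rho_g(j) \leq N$; this is the asserted tail estimate.

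For the cocycle identity, the relation $c_f^{(g),+,(n)}(x, y) c_f^{(g),+,(n)}(y, z) = c_f^{(g),+,(n)}(x, z)$ holds at every finite level by telescoping cancellation of the middle block $(P_n^y)^{-1} P_n^y$, and it passes to the limit by joint continuity of multiplication in $H$. The statements for $c_f^{(g),-}$ are entirely parallel, with $g^{-j}$ in place of $g^j$; the same estimates apply since symmetry of $S$ gives $\ell_S(g^{-j}) = \ell_S(g^j)$. The main obstacle is the second step, where one has to pair the combinatorial decay produced by H\"older continuity with the arithmetic decay $r^{\rho_g(j)}$: it is precisely here that (SDT) is decisive, because without it $\sum_j r^{\rho_g(j)}$ can easily diverge (for example when $\rho_g(j) = O(\log j)$), and bi-invariance alone cannot recover convergence.
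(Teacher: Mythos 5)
Your proposal is correct and follows essentially the same route as the paper's proof: both use bi-invariance of $d$ to reduce the distance between partial products to $\sum_j d(f(g^jx),f(g^jy))$, then observe that $g^jx$ and $g^jy$ agree on $B(\rho_g(j)-N-1)$ for $j>\rho_g^{-1}(N)$ and invoke H\"{o}lder continuity plus the (SDT) summability of $\sum_j r^{\rho_g(j)}$ to get Cauchyness, the tail estimate with $C'=Cr^{-N-1}$, and convergence by completeness. The only cosmetic difference is that you telescope one step at a time while the paper estimates the block from $n$ to $n'$ directly, and you additionally spell out the passage to the limit for the cocycle identity, which the paper leaves implicit.
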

\begin{proof}
For every $j>(\rho_g)^{-1}(N)$ and every $h\in B(\rho_g(j)-N-1)$, one has $\ell_S(g^{-j}h)\geq \ell_S(g^{-j})-\ell(h)\geq \rho_g(j)-(\rho_g(j)-N-1)=N+1$, hence $(g^jx)_{B(\rho_g(j)-N-1)}=x_{g^{-j}B(\rho_g(j)-N-1)}=y_{g^{-j}B(\rho_g(j)-N-1)}=(g^jy)_{B(\rho_g(j)-N-1)}$. Thus, for every $\rho_g^{-1}(N)<n<n'$, one has 
\begin{eqnarray*}
&&d(c_f^{(g),+,(n)}(x,y), c_f^{(g),+,(n')}(x,y))\\
&=&d(\Big(\prod_{j=0}^{n-1}f(g^jx)^{-1}
\Big)\Big(\prod_{j=0}^{n-1}f(g^jy)^{-1}\Big)^{-1}, \Big(\prod_{j=0}^{n'-1}f(g^jx)^{-1}
\Big)\Big(\prod_{j=0}^{n'-1}f(g^jy)^{-1}\Big)^{-1})\\
&=& d(1_H, \Big(\prod_{j=n}^{n'-1}f(g^jx)^{-1}
\Big)\Big(\prod_{j=n}^{n'-1}f(g^jy)^{-1}\Big)^{-1})\\
&=& d(\prod_{j=n}^{n'-1}f(g^jy)^{-1}, \prod_{j=n}^{n'-1}f(g^jx)^{-1}
)\\
&\leq &\sum_{j=n}^{n'-1}d(f(g^jy)^{-1}, f(g^jx)^{-1}) \\
&=& \sum_{j=n}^{n'-1}d(f(g^jx), f(g^jy))\\
&\leq & C\sum_{j=n}^{n'-1}r^{\rho_g(j)-N-1}
\leq  Cr^{-N-1}\sum_{j=n}^{\infty}r^{\rho_g(j)}.
\end{eqnarray*} 
Thus, $\{c_f^{(g),+,(n)}(x,y)\}_{n\in\N}$ is a Cauchy sequence since $g$ has (SDT) property. As $H$ is complete, $c_f^{(g),+}$ is well defined. Choose $C'=Cr^{-N-1}$, then we also get \eqref{estimate of partial sum} from the last inequality. Similarly, $c_f^{(g),-}$ is also well-defined. 
\end{proof}

\subsection{Specification property for shifts} \label{section on specification property}
For the proof of Theorem \ref{main thm in 2nd approach}, we need the following version of specification property for a subshift. This is one key novelty in this paper since to handle $\mathscr{G}_{inv}$-target (H\"{o}lder continuous) cocycles, we need to generalize the version of specification used in Schmidt's approach in \cite{Schmidt95}. In his definition, it involves some cone structure that is defined using the Euclidean structure of $\Z^d$, so to extend this definition to general groups actions, we need a suitable counterpart for the Euclidean structure. Note that we only need a degenerate cone (i.e. a line) structure when defining the specification property used in \cite{CJ}.

\begin{definition}
\label{D-specification}
Let $G$ be a finitely generated group with a finite generating set $S$. Let $a$ be an element in $G$. Write $r_j=\rho_a(j)/4$, for every $R\geq 0$, we define
$$\Delta^+(a,R):=\{(x,x')\in \Delta_X: x_{a^{k}B([r_k]+R)}=x'_{a^{k}B([r_k]+R)},~\mbox{for all $k\geq 0$}\},$$
$$\Delta^-(a,R):=\{(x,x')\in \Delta_X: x_{a^{-k}B([r_k]+R)}=x'_{a^{-k}B([r_k]+R)},~\mbox{for all $k\geq 0$}\}.$$
We say that the equivalence class $\Delta_X(\bar{x})$ of a point $\bar{x}\in X$ has \emph{strong $a$-specification} (respectively, \emph{$a$-specification}) if $\Delta_X(\bar{x})\cap a^{-1}\Delta_X(\bar{x})$(respectively, $\Delta_X(\bar{x})$) is dense in $X$, and if  there exist constants $s'\geq 1, t'\geq 0$ with the following property: for any $R\geq 0$, if $x,x'\in \Delta_X(\bar{x})$ satisfy that $x_{B(N)}=x'_{B(N)}$, where $N=\lceil s'\ell_S(a)\rho_a^{-1}(4R)+2R+t'\rceil,$ then we can find an element $y$ in $\Delta_X(\bar{x})$ such that $(x,y)\in \Delta^+(a, R)$ and $(x',y)\in \Delta^-(a, R)$. And we say that the homoclinic equivalence relation $\Delta_X$ has \emph{strong $a$-specification} (respectively, \emph{$a$-specification}) if there exists a point $\bar{x}\in X$ such that $\Delta_X(\bar{x})$ has strong $a$-specification (respectively, $a$-specification).
\end{definition}
\begin{remark}
In the above definition of $r_j$, one can take a different ratio than $1/4$ as long as it is less than $1/2$. This would be clear from the proof of Lemma \ref{cone intersection is small} below. For later use, we would take $a$ to be an (SDT) or undistorted element.
\end{remark}

We record the following lemmas for later use.
\begin{lem}
\label{cone intersection is small}
Let $g$ be an element in $G$ with infinite order. Then
$(\bigcup_{j\geq 0}g^{-j}B([r_j]+R))\cap (\bigcup_{j\geq 0}g^{j}B([r_j]+R))\subseteq B(\ell_S(g)\rho_g^{-1}(4R)+2R)$. 
\end{lem}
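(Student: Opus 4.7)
The plan is to start with an arbitrary element $x$ in the left-hand intersection and write it in two ways according to the two unions: $x = g^{-j}b_1 = g^{k}b_2$ for some $j,k \geq 0$ and some $b_i$ with $\ell_S(b_1) \leq [r_j] + R$, $\ell_S(b_2)\leq [r_k]+R$. The key algebraic identity is then $g^{j+k} = b_1 b_2^{-1}$, which gives the crucial single-sided estimate
\[
\ell_S(g^{j+k}) \;\leq\; \ell_S(b_1) + \ell_S(b_2) \;\leq\; r_j + r_k + 2R.
\]
Using the defining inequality $\rho_g(n) \leq \ell_S(g^n)$ (Proposition \ref{property of distortion functions}(2) with $T=\{g^{\pm}\}$), we translate this into a bound on $\rho_g(j+k)$.

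The next step is a simple WLOG argument. Assuming $j \leq k$ (the other case being symmetric), monotonicity of $\rho_g$ yields $\rho_g(j+k) \geq \rho_g(k) = 4r_k$. Combined with the previous bound and the monotonicity $r_j \leq r_k$, we obtain $4r_k \leq 2r_k + 2R$, hence $r_k \leq R$, that is $\rho_g(k) \leq 4R$. By the convention $\rho_g^{-1}(c) := \sup\{\lambda>0 : \rho_g(\lambda) \leq c\}$ recalled after Proposition \ref{property of distortion functions}, this forces $k \leq \rho_g^{-1}(4R)$, and the same estimate holds a fortiori for $j$.

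Finally, to bound $\ell_S(x)$ itself, write $x = g^{k}b_2$ and use the trivial estimate $\ell_S(g^k) \leq k\,\ell_S(g)$ together with the bounds from the previous step:
\[
\ell_S(x) \;\leq\; \ell_S(g^k) + \ell_S(b_2) \;\leq\; \ell_S(g)\,\rho_g^{-1}(4R) + r_k + R \;\leq\; \ell_S(g)\,\rho_g^{-1}(4R) + 2R,
\]
proving that $x \in B(\ell_S(g)\rho_g^{-1}(4R) + 2R)$. In the symmetric case $k \leq j$ the same computation is run using $x = g^{-j}b_1$ and $\ell_S(g^{-j}) = \ell_S(g^{j})$ (here we implicitly use that $S$ is symmetric, as already assumed throughout).

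I do not foresee any real obstacle: the lemma is essentially an elementary calculation, and the only subtle point worth stating precisely in the write-up is why the ratio $1/4$ appearing in $r_j$ is what makes the inequality $4r_k \leq 2r_k + 2R$ collapse to a usable bound, which is exactly the flexibility mentioned in the remark after Definition \ref{D-specification}: any fixed ratio strictly less than $1/2$ would produce the same qualitative conclusion, with constants adjusted accordingly.
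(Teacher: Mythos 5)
Your argument is correct and follows essentially the same route as the paper's proof: both start from $h=g^{-j}h_1=g^{k}h_2$, use the identity $g^{j+k}=h_1h_2^{-1}$ together with $\rho_g(j+k)\leq\ell_S(g^{j+k})$ and the monotonicity of $\rho_g$ to force $\rho_g(j+k)\leq 4R$ (hence $j+k\leq\rho_g^{-1}(4R)$), and then bound $\ell_S(h)$ directly from one of the two factorizations. The only cosmetic difference is your WLOG case split on $j\leq k$, which the paper avoids by estimating $r_{j}+r_{k}\leq 2r_{j+k}$ and always using the representation $h=g^{k}h_2$.
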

\begin{proof}
Suppose $h=g^{-j_1}h_1=g^{j_2}h_2$ for some $j_1, j_2\geq 0$ and $h_i\in B([r_{j_i}]+R)$, $i=1, 2$.
Then $g^{j_1+j_2}=h_1h_2^{-1}$; hence, 
$\rho_g(j_1+j_2)\leq \ell_S(g^{j_1+j_2})=\ell_S(h_1h_2^{-1})\leq [r_{j_1}]+[r_{j_2}]+2R\leq 2r_{j_1+j_2}+2R\leq 2\rho_g(j_1+j_2)/4+2R$,
therefore, $\rho_g(j_1+j_2)\leq 4R$, and $j_1+j_2\leq \rho_g^{-1}(4R)$, so $\ell_S(h)=\ell_S(g^{j_2}h_2)\leq \ell_S(g^{j_2})+\ell_S(h_2)\leq \ell_S(g)j_2+r_{j_2+j_1}+R\leq \ell_S(g)\rho_g^{-1}(4R)+\rho_g(j_1+j_2)/4+R\leq \ell_S(g)\rho_g^{-1}(4R)+2R$.
\end{proof}
\begin{lem}\label{full shifts have strong specification}
Let $(G, A^G)$ be the full shift. If $a\in G$ is an (SDT) element, then $\Delta_X(\bar{x})$ has strong $a$-specification, where $\bar{x}=(0)_G$, i.e. $\bar{x}$ is the element in $X=A^G$ with every coordinate to be a constant $0\in A$.
\end{lem}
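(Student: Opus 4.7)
The approach is a direct coordinate-wise gluing. Since $\bar x = (0)_G$ is globally fixed by the $G$-action (every coordinate is the constant $0\in A$), in particular $a\bar x = \bar x$, so $a^{-1}\Delta_X(\bar x) = \Delta_X(\bar x)$, and this equivalence class coincides with the set of finitely supported configurations in $A^G$ (those that agree with $\bar x$ outside a finite subset of $G$). This set is visibly dense in $A^G$, so the density requirement in Definition \ref{D-specification} holds automatically, both for the strong and the ordinary versions.

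For the quantitative part I propose taking $s' = 1$ and $t' = 0$, so $N = \lceil \ell_S(a)\,\rho_a^{-1}(4R) + 2R \rceil$. Given $x,x' \in \Delta_X(\bar x)$ with $x_{B(N)} = x'_{B(N)}$, I define $y\in A^G$ by
\begin{equation*}
y_h = \begin{cases} x_h & \text{if } h \in a^{k}B([r_k]+R) \text{ for some } k\geq 0,\\ x'_h & \text{if } h \in a^{-k}B([r_k]+R) \text{ for some } k\geq 0,\\ 0 & \text{otherwise.}\end{cases}
\end{equation*}
The only thing needing verification for well-definedness is that the first two clauses agree on the overlap of the forward and backward cones. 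Lemma \ref{cone intersection is small} applied with $g = a$ gives
\begin{equation*}
\Bigl(\bigcup_{k\geq 0} a^{-k}B([r_k]+R)\Bigr) \cap \Bigl(\bigcup_{k\geq 0} a^{k}B([r_k]+R)\Bigr) \subseteq B\bigl(\ell_S(a)\rho_a^{-1}(4R)+2R\bigr) \subseteq B(N),
\end{equation*}
and on $B(N)$ we have $x_h = x'_h$ by hypothesis, so the two partial definitions of $y$ agree wherever they both apply.

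The remaining verifications are immediate. The support of $y$ is contained in $\mathrm{supp}(x)\cup \mathrm{supp}(x')$, which is finite, so $y\in \Delta_X(\bar x)$; and the inclusions $(x,y)\in \Delta^+(a,R)$ and $(x',y)\in \Delta^-(a,R)$ follow directly from the first two clauses defining $y$. There is no real obstacle here: the substantive content has been absorbed into Lemma \ref{cone intersection is small}, and on a full shift one may freely prescribe coordinates on disjoint subsets of $G$ with no compatibility constraints. The (SDT) hypothesis on $a$ is actually not used in this lemma beyond the fact that $a$ has infinite order (needed so that $\rho_a^{-1}$ is meaningful); it earns its keep later, where control on the growth of $\rho_a$ is needed for convergence of sums of the form $\sum_j r^{\rho_a(j)}$ appearing in \eqref{estimate of partial sum}.
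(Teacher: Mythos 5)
Your construction is correct and is essentially the paper's own argument: the paper simply defers to the proof of Lemma 4 in \cite{CJ} with Lemma \ref{cone intersection is small} substituted for the old intersection lemma, and that proof is precisely the coordinate-wise gluing you describe (take $x$ on the forward cone, $x'$ on the backward cone, $0$ elsewhere), with well-definedness on the overlap guaranteed by the cone-intersection bound and the hypothesis $x_{B(N)}=x'_{B(N)}$. Your closing observation that only infinite order of $a$ (not the full (SDT) property) is needed here is also consistent with the paper, which later asserts strong $a$-specification of the golden-mean subshifts for every infinite-order $a$.
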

\begin{proof}
The proof is similar to the proof of \cite[Lemma 4]{CJ}, but we use Lemma \ref{cone intersection is small} now instead of \cite[Lemma 3]{CJ}.
\end{proof}

\section{Proof of Theorem \ref{main thm in 2nd approach}}
Recall that the main steps in the proof of \cite[Theorem 1]{CJ} is to apply \cite[Lemma 2, Corollary 2, Lemma 5, Lemma 6, Lemma 7, Lemma 1]{CJ} successively. In this section, we prepare the corresponding lemmas in our setting. The proofs are natural modification of the ones in \cite{CJ}. For completeness, we record the main changes below.

\begin{lem}
\label{compare lemma 2 in CJ}
Let $G$ be a group with a finite symmetric generating set $S$. Assume $G$ has one end, sub-exponential divergence function and $g, h$ are (SDT) elements in $G$, then $c_{f_g}^{(g), +}(x, y)=c_{f_h}^{(h), +}(x, y)$ for all $(x, y)\in\Delta_X$, where $f_g(x):=c(g, x), ~f_h(x):=c(h, x)$ for all $x\in X$ and $c: G\times X\to H$ is a H\"{o}lder continuous cocycle with $(G, X)$ being a subshift and $H\in\mathscr{G}_{inv}$.
\end{lem}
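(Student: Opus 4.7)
The plan is to reduce the equality of the two limits to a quantitative comparison of the quantity $\phi(t):=c(t,x)^{-1}c(t,y)$ along two sequences tending to infinity in $G$, and then to connect those sequences via a path in the Cayley graph that exploits one-endedness and sub-exponential divergence. A short telescoping argument using the cocycle identity together with $c(e,\cdot)=1_H$ rewrites
\begin{equation*}
c_{f_g}^{(g),+,(n)}(x,y)=c(g^n,x)^{-1}c(g^n,y)=\phi(g^n),
\end{equation*}
and similarly $c_{f_h}^{(h),+,(n)}(x,y)=\phi(h^n)$, so by Lemma~\ref{limit exists in c_f^g, +} the task becomes to show $\lim_n\phi(g^n)=\lim_n\phi(h^n)$.

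The key intermediate step is the following local estimate: if $N\in\N$ satisfies $x_k=y_k$ for all $k\notin B(N)$, then for every $s\in S$, every $M\geq 0$ and every $t\in G$ with $\ell_S(t)>N+M$,
\begin{equation*}
d(\phi(st),\phi(t))\leq C_sr_s^M,
\end{equation*}
where $(C_s,r_s)$ are H\"older constants for $c(s,\cdot)$. The verification is short: the cocycle identity $c(st,z)=c(s,tz)c(t,z)$ combined with the bi-invariance of $d$ on $H$ reduces the left-hand side to $d(c(s,tx),c(s,ty))$; and since $x,y$ differ only on $B(N)$, the shifted configurations $tx,ty$ differ only on $tB(N)$, which is disjoint from $B(M)$ whenever $\ell_S(t)>N+M$, so H\"older continuity of $c(s,\cdot)$ delivers the bound.

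To implement the path argument, for each large $L$ I pick the smallest exponents $m=m(L),m'=m'(L)\geq 1$ with $\ell_S(g^m)\geq L$ and $\ell_S(h^{m'})\geq L$; these exist and tend to infinity because $g,h$ being (SDT) forces $\rho_g(n),\rho_h(n)\to\infty$, and minimality yields $\ell_S(g^m),\ell_S(h^{m'})\in[L,L+C_0)$ with $C_0:=\max(\ell_S(g),\ell_S(h))$. Setting $M:=\lfloor L/2\rfloor-N-2$, sub-exponential divergence (applied with base point the identity) together with one-endedness produces a path from $g^m$ to $h^{m'}$ in the Cayley graph of length at most $Div_X(2L+C_0)$, each of whose vertices has word length exceeding $N+M$; and because $S$ is symmetric, the inversion map $g\mapsto g^{-1}$ is an isometry between the right and left Cayley graphs of $G$, which lets me realize this path as a sequence of left multiplications $t_{i+1}=s_it_i$ with $s_i\in S$, the form needed for the local estimate. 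Summing the local estimate along the path yields
\begin{equation*}
d(\phi(g^m),\phi(h^{m'}))\leq C\cdot Div_X(2L+C_0)\cdot r^{\lfloor L/2\rfloor-N-2},
\end{equation*}
with $C=\max_{s\in S}C_s$ and $r=\max_{s\in S}r_s<1$. Letting $L\to\infty$, the right-hand side tends to $0$ by sub-exponential growth of $Div_X$, while the left-hand side converges to $d(c_{f_g}^{(g),+}(x,y),c_{f_h}^{(h),+}(x,y))$ by Lemma~\ref{limit exists in c_f^g, +}, giving the desired equality.

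The main obstacle is the joint choice of exponents $(m(L),m'(L))$. A naive attempt to compare $\phi(g^m)$ and $\phi(h^m)$ at a common exponent $m$ would fail when $g,h$ have genuinely different distortions, since then $\ell_S(g^m)$ and $\ell_S(h^m)$ could live at incompatible scales and the divergence bound at the larger scale could not be absorbed by the H\"older decay at the smaller one. Matching both endpoints to a common sphere of radius $\approx L$ around the identity is precisely what makes the divergence-versus-H\"older competition play out at a single scale $L$, and this is what lets sub-exponential divergence suffice even for (SDT) elements that may be distorted in $G$; the left/right Cayley graph bookkeeping is then a minor technicality resolved by the symmetry of $S$.
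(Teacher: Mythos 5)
Your proposal is correct and follows essentially the same route as the paper's proof: both reduce the claim to comparing $c(t,x)^{-1}c(t,y)$ at $t=g^{n}$ and $t=h^{m}$ with the exponents matched so that the two group elements lie at comparable word-length scales, connect them by a path avoiding a ball about the identity (one-endedness plus finiteness of the divergence), telescope the cocycle along the path via the H\"older bound, and let sub-exponential divergence absorb the path-length factor. The differences are cosmetic --- you normalize both elements to a common sphere of radius $\approx L$ while the paper arranges $2\ell_S(g^n)<\ell_S(h^{m(n)})<3\ell_S(g^n)$, and you isolate the telescoping step as a local estimate for $\phi$ --- and the only slips (writing $Div_X(2L+C_0)$ for $Div_X(2L+2C_0)$, the $\pm1$ slack in $M$, and the fact that inverting a right-Cayley path from $a$ to $b$ yields a left-multiplication chain from $a^{-1}$ to $b^{-1}$, so the divergence bound should be applied to $(g^{-m},h^{-m'})$) are harmless.
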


\begin{proof}
First, since $\lim_{n\to\infty}\ell_S(g^n)=\infty$, after passing to a subsequence, we may assume $3\ell_S(g^n)/2<\ell_S(g^{n+1})$ for all $n\geq 1$. Then, for each large enough $n$ (e.g. any $n$ such that $\ell_S(g^n)>\ell_S(h)$), we may find $m(n)$ such that $2\ell_S(g^n)<\ell_S(h^{m(n)})<3\ell_S(g^{n})$. This is possible since $|\ell_S(h^{m})-\ell_S(h^{m+1})|\leq \ell_S(h)<3\ell_S(g^n)-2\ell_S(g^n)$.

Note that we have the following facts. 

(1) We may assume $\{m(n)\}_n$ is an increasing sequence after passing to a subsequence since $3\ell_S(g^n)<2\ell_S(g^{n+1})$. 

(2) $\ell_S(g^{-n}h^{m(n)})\geq \ell_S(h^{m(n)})-\ell_S(g^n)\geq \ell_S(g^n)\to\infty$ as $n\to\infty$.

(3) $\ell_S(g^{-n}h^{m(n)})\leq \ell_S(h^{m(n)})+\ell_S(g^n)\leq (1/2+1)\ell_S(h^{m(n)})$, also $\leq (1+3)\ell_S(g^n)$; hence $\ell_S(g^{-n}h^{m(n)})\leq 4\cdot\min\{\ell_S(g^n), \ell_S(h^{m(n)})\}$.

Since both $g$ and $h$ are (SDT) elements, by Lemma \ref{limit exists in c_f^g, +} and the definition of $c_f^{(g), +, (n)}$, we just need to show $$\lim_{m=m(n)\to\infty}\lim_{n\to\infty}d(c(h^m, x)^{-1}c(h^m, y), c(g^n, x)^{-1}c(g^n, y))=0.$$ 

Since $(x, y)\in \Delta_X$, write $x|_{B(R)^c}=y|_{B(R)^c}$ for some $R>0$.

Let $n>>1$. Since $G$ has one end, we can find a path $p=p(n)$ which avoids the ball with radius $\min\{\ell_S(g^{-n}), \ell_S(h^{-m(n)})\}/2-2$ to connect $g^{-n}$ with $h^{-m}$. Write $g^{-n}=h^{-m}s_1^{-1}\cdots s_k^{-1}$, where $k\leq Div_X(\ell_S(g^{-n}h^m))$ and $s_i\in S$. Then from the following equality:
$$c(g^n, x)=c(s_k, s_{k-1}\cdots s_1h^mx)c(s_{k-1}, s_{k-2}\cdots s_1h^mx)\cdots c(s_1, h^mx)c(h^m, x),$$
we deduce that 
\begin{eqnarray*}
&&d(c(h^m, x)^{-1}c(h^m, y), c(g^n, x)^{-1}c(g^n, y))\\
&=& d(c(s_k, s_{k-1}\cdots s_1h^mx)c(s_{k-1}, s_{k-2}\cdots s_1h^mx)\cdots c(s_1, h^mx),\\
&& c(s_k, s_{k-1}\cdots s_1h^my)c(s_{k-1}, s_{k-2}\cdots s_1h^my)\cdots c(s_1, h^my))\\
&\leq& \sum_{j=1}^k d(c(s_j, s_{j-1}\cdots s_1h^mx), c(s_j, s_{j-1}\cdots s_1h^my)).\\
\end{eqnarray*}
Now, note that $(s_{j-1}\cdots s_ih^mx)_{B(M)}=(s_{j-1}\cdots s_ih^my)_{B(M)}$ for all $1\leq j\leq k$, where 
$M=\min\{\ell_S(g^{-n}), \ell_S(h^{-m})\}/2-2-R-1\geq\ell_S(g^{-n}h^m)/8-R-3.$

Hence, we have the following estimation.
\begin{eqnarray*}
&&\sum_{j=1}^k d(c(s_j, s_{j-1}\cdots s_1h^mx), c(s_j, s_{j-1}\cdots s_1h^my))\\
&\leq & kCr^{M}\leq C\cdot Div_X(\ell_S(g^{-n}h^m))r^{\ell_S(g^{-n}h^m)/8-R-3}\\
&\to & 0, ~\mbox{as $n\to\infty$ since $\lim_{n\to\infty}\ell_S(g^{-n}h^m)=\infty$}.
\end{eqnarray*}
Here $r:=\max\{r_s: s\in S\}, C:=\max\{C_s: s\in S\}$, where $r_s, C_s$ are the constants appeared when defining H\"{o}lder continuity of $c(s, -)$.
\end{proof}
\begin{remark}
One can also use Gersten's definition of divergence functions in \cite{gersten_div} for the above proof (still assuming the sub-exponential growing condition on these functions). For the relation between this function and the one used in this paper, see \cite[Lemma 3.13]{dms}.
\end{remark}

Similar to \cite[Corollary 2]{CJ}, we have the following lemma.
\begin{lem}
\label{compare corollary 2 in CJ}
Let $G$ be a group with a finite symmetric generating set $S$. Assume $G$ has one end and $g$ is an (SDT) element in $G$, then $c_{f_g}^{(g), +}(x, y)=c_{f_g}^{(g), -}(x, y)$ for all $(x, y)\in\Delta_X$, where $f_g(x):=c(g, x)$ for all $x\in X$ and $c: G\times X\to H$ is a H\"{o}lder continuous cocycle with $(G, X)$ being a subshift and $H\in\mathscr{G}_{inv}$.
\end{lem}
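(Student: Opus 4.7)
The plan is to deduce this as an immediate consequence of Lemma \ref{compare lemma 2 in CJ}, applied to the pair $(g, g^{-1})$, after identifying $c_{f_{g^{-1}}}^{(g^{-1}),+}$ with $c_{f_g}^{(g),-}$ via a telescoping of partial products.

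First I would verify that $g^{-1}$ is itself an (SDT) element. Since the generating set $S$ is symmetric, $\ell_S(g^{n})=\ell_S(g^{-n})$ for every $n$, so the compression functions $\rho_{\langle g\rangle}^G$ and $\rho_{\langle g^{-1}\rangle}^G$ (both computed with $T=\{g^{\pm}\}$) coincide, and the convergence $I(r)<\infty$ in Definition \ref{def of SDT groups} transfers automatically. Applying Lemma \ref{compare lemma 2 in CJ} with $h:=g^{-1}$ then yields
\[
c_{f_g}^{(g),+}(x,y) \;=\; c_{f_{g^{-1}}}^{(g^{-1}),+}(x,y) \qquad \text{for every } (x,y)\in\Delta_X.
\]

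The second step is a purely formal computation. Using the cocycle identity $c(g^{-1},g^{-j}x)=c(g^{-(j+1)},x)\,c(g^{-j},x)^{-1}$, the partial product defining $c_{f_{g^{-1}}}^{(g^{-1}),+,(n)}(x,y)$ telescopes to $c(g^{-n},x)^{-1}c(g^{-n},y)$. The analogous identity $c(g,g^{-j}x)=c(g^{-(j-1)},x)\,c(g^{-j},x)^{-1}$ causes the partial product defining $c_{f_g}^{(g),-,(n)}(x,y)$ to telescope to $c(g^{-(n-1)},x)^{-1}c(g^{-(n-1)},y)$. Hence the two partial-sum sequences agree up to a single index shift; both converge by Lemma \ref{limit exists in c_f^g, +} (using that $g^{-1}$ is also (SDT)), so their limits coincide, giving $c_{f_{g^{-1}}}^{(g^{-1}),+}=c_{f_g}^{(g),-}$. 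Combining with the previous display proves the corollary.

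I do not expect any real obstacle in this argument: the (SDT) hypothesis is manifestly symmetric under $g\leftrightarrow g^{-1}$, and the ``$+$''-versus-``$-$'' identification after this swap is a routine telescoping of cocycle partial products that relies only on the cocycle equation and the completeness of $H$. All the substantive work (one-endedness, sub-exponential divergence, the Cayley-graph path construction connecting $g^{-n}$ to $h^{-m(n)}$) has already been absorbed into the proof of Lemma \ref{compare lemma 2 in CJ}.
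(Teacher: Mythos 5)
Your proposal is correct and is essentially the paper's own route: the authors prove this lemma exactly as \cite[Corollary 2]{CJ} is proved, namely by applying Lemma \ref{compare lemma 2 in CJ} with $h=g^{-1}$ (noting $\rho_{\langle g^{-1}\rangle}^G=\rho_{\langle g\rangle}^G$, so $g^{-1}$ is (SDT)) and then telescoping the partial products via the cocycle identity to identify $c_{f_{g^{-1}}}^{(g^{-1}),+}$ with $c_{f_g}^{(g),-}$. The only caveat, which applies equally to the paper's version, is that this route invokes the sub-exponential divergence hypothesis of Lemma \ref{compare lemma 2 in CJ}, which is assumed in Theorem \ref{main thm in 2nd approach} but not restated in the present lemma.
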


\begin{lem}
\label{compare lemma 5 in CJ}
Let $X\subset A^G$ be a subshift and $f: X\to H$ be H\"{o}lder continuous, where $H\in\mathscr{G}_{inv}$. Assume that $\Delta_X(\bar{x})$ has $g$-specification for some point $\bar{x}\in X$, where $g\in G$ is an element with (SDT) property and the cocycles $c_f^{(g),\pm}:\Delta_X\to H$ in Lemma \ref{limit exists in c_f^g, +} are equal.
Then 

$\lim_{\substack{(x,x')\to\Delta \\ x,x'\in\Delta_X(\bar{x})}}d(c_f^{(g),+}(x,x'),e_H)=0$, where $\Delta\subset X\times X$ denotes the diagonal.
\end{lem}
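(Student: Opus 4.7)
The plan is to follow the Schmidt-type strategy adapted to general groups via the $g$-specification of Definition \ref{D-specification}, using the equality $c_f^{(g),+}=c_f^{(g),-}$ to split the cocycle into two manageable pieces. Fix $\varepsilon>0$. I will choose $R\geq 0$ large, set $N:=\lceil s'\ell_S(g)\rho_g^{-1}(4R)+2R+t'\rceil$ as in Definition \ref{D-specification}, and show that $d(c_f^{(g),+}(x,x'),e_H)<\varepsilon$ whenever $x,x'\in\Delta_X(\bar x)$ satisfy $x_{B(N)}=x'_{B(N)}$. Since $N\to\infty$ as $R\to\infty$, this delivers the stated limit along pairs in $\Delta_X(\bar x)$ converging to the diagonal.

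Given such $x,x'$, apply $g$-specification to produce $y\in\Delta_X(\bar x)$ with $(x,y)\in\Delta^+(g,R)$ and $(x',y)\in\Delta^-(g,R)$. The cocycle identity and the hypothesis $c_f^{(g),+}=c_f^{(g),-}$ give
\begin{equation*}
c_f^{(g),+}(x,x')=c_f^{(g),+}(x,y)\cdot c_f^{(g),-}(y,x'),
\end{equation*}
so by bi-invariance of $d$ it suffices to show that each of $d(c_f^{(g),+}(x,y),e_H)$ and $d(c_f^{(g),-}(y,x'),e_H)$ is at most $C^\sharp r^R$ with $C^\sharp$ independent of $R,x,x'$. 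For the first factor, $(x,y)\in\Delta^+(g,R)$ translates into $(g^k x)_{B([r_k]+R)}=(g^k y)_{B([r_k]+R)}$ for every $k\geq 0$, so H\"{o}lder continuity of $f$ yields $d(f(g^k x),f(g^k y))\leq Cr^{[r_k]+R}$. A telescoping argument using bi-invariance (parallel to the one in Lemma \ref{limit exists in c_f^g, +}) produces
\begin{equation*}
d(c_f^{(g),+,(n)}(x,y),e_H)\leq\sum_{k=0}^{n-1}d(f(g^k x),f(g^k y))\leq C r^{R-1}\sum_{k=0}^{\infty}\bigl(r^{1/4}\bigr)^{\rho_g(k)},
\end{equation*}
where we used $[r_k]\geq \rho_g(k)/4-1$. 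The right-hand sum is finite since $g$ has the (SDT) property, applied with base $r^{1/4}\in(0,1)$, so letting $n\to\infty$ gives the required bound. The $(-)$-factor is handled symmetrically from $(x',y)\in\Delta^-(g,R)$, and the cocycle identity together with bi-invariance gives $d(c_f^{(g),-}(y,x'),e_H)=d(c_f^{(g),-}(x',y),e_H)$.

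The main point where care is needed is the interplay between the expanding cones $\{g^kB([r_k]+R)\}_k$ appearing in $\Delta^\pm(g,R)$ and the (SDT) condition: the radii were engineered in Definition \ref{D-specification} precisely so that a geometric-type sum of the form $\sum_k r^{\rho_g(k)/4}$ converges for (SDT) elements, which is the quantitative upgrade of the line-shaped \emph{cones} used in \cite{CJ} needed to handle $\mathscr{G}_{inv}$-valued H\"{o}lder cocycles. The remaining bookkeeping, namely that $N=N(R)$ grows with $R$ (so that $x,x'$ agreeing on a sufficiently large ball forces $d(c_f^{(g),+}(x,x'),e_H)$ to be small), is already built into Definition \ref{D-specification}.
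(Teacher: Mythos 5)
Your proposal is correct and follows essentially the same route as the paper: produce $y$ via $g$-specification, split $c_f^{(g),+}(x,x')$ through $y$ using the cocycle identity and the hypothesis $c_f^{(g),+}=c_f^{(g),-}$, and bound each factor by $Cr^R\sum_{k}r^{[r_k]}$, which is finite by the (SDT) property. The only discrepancy is a sign-convention swap in translating $\Delta^{\pm}(g,R)$ into coordinate agreements (under the paper's convention $\Delta^{+}(g,R)$ controls $(g^{-k}x)_{B([r_k]+R)}$, hence the $-$ cocycle, and $\Delta^{-}(g,R)$ controls the $+$ cocycle), but this is immaterial since the assumed equality $c_f^{(g),+}=c_f^{(g),-}$ lets the two bounds be interchanged.
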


\begin{proof}
Fix any $l>1, R>0$.

Now, fix $(x', x)\in \Delta_X$ and suppose $d(x', x)$ is sufficiently small such that $x_{B(L)}=x'_{B(L)}$, where $L\geq \lceil s'\ell_S(g)\rho_g^{-1}(4R)+2R+t'\rceil$ for some $s',t'$ as in Definition \ref{D-specification}, then $\Delta_X(\bar{x})$ has $g$-specification implies there exists some $y\in \Delta_X(\bar{x})$ with $(x', y)\in\Delta^{-}(g, R)$ and $(x, y)\in \Delta^+(g, R)$.

Since $(x', y)\in\Delta^{-}(g, R)$, $x', y$ satisfy that $(g^jx')_{B([r_j]+R)}=(g^jy)_{B([r_j]+R)}$ for all $j\geq 0$. Then we deduce the following.
\begin{eqnarray*}
d(\prod_{j=0}^{l-1}f(g^{j}x')^{-1}, \prod_{j=0}^{l-1}f(g^{j}y)^{-1})&\leq& \sum_{j=0}^{l-1}d(f(g^jx'), f(g^jy))
\leq\sum_{j=0}^{l-1} Cr^{[r_j]+R}\\
&= & Cr^R\sum_{j=0}^{l-1}r^{[r_j]}\leq r^{R} C\sum_{j\geq 0}r^{[r_j]}.
\end{eqnarray*}
Hence, $d(c_f^{(g), +}(y, x'), e_H)\leq r^{R}C\sum_{j\geq 1}r^{[r_j]}$.
Here, $r, C$ are the constants appeared when defining $f$ is H\"{o}lder continuous. We have used $d(f(g^jx'), f(g^jy))\leq Cr^{[r_j]+R}$ and $\sum_{j\geq 0}r^{[r_j]}<\infty$ (since $g$ has (SDT) property).

Next, since $(x, y)\in \Delta^+(g, R)$, $(g^{-j}x)|_{B([r_j]+R)}=(g^{-j}y)|_{B([r_j]+R)}$ for all $j\geq 0$. We deduce the following.
\begin{eqnarray*}
d(\prod_{j=1}^{l-1}f(g^{-j}x), \prod_{j=1}^{l-1}f(g^{-j}y))&\leq& \sum_{j=1}^{l-1}d(f(g^{-j}x), f(g^{-j}y))
\leq \sum_{j=1}^{l-1} Cr^{[r_j]+R}\\
&= & Cr^R\sum_{j=1}^{l-1}r^{[r_j]}
\leq r^{R}C\sum_{j\geq 1}r^{[r_j]}.
\end{eqnarray*}
Hence $d(c_f^{(g), -}(x, y), e_H)\leq r^{R} C\sum_{j\geq 1}r^{[r_j]}$.
Thus, we deduce 
\begin{eqnarray*}
d(c_f^{(g), +}(x, x'), e_H)&\leq& d(c_f^{(g), +}(x, x'), c_f^{(g), +}(x, y))+d(c_f^{(g), +}(x, y), e_H)\\
&=&d(c_f^{(g), +}(y, x'), e_H)+d(c_f^{(g), -}(x, y), e_H)\\
&\leq & 2Cr^R\sum_{j\geq 0}r^{[r_j]}\to 0, \mbox{as $R\to\infty$}.
\qedhere \end{eqnarray*}
\end{proof}

\begin{lem}
\label{compare lemma 6 in CJ}
Assume that there exists a point $\bar{x}\in X$ such that 
$\Delta_X(\bar{x})\cap g^{-1}\Delta_X(\bar{x})$ is dense in $X$, where $g\in G$ has (SDT) property, and $\lim_{\substack{(x,x')\to\Delta \\ x,x'\in\Delta_X(\bar{x})}}d(c_f^{(g),+}(x,x'),e_H)=0$, where $f: X\to H$ is H\"{o}lder continuous and $H\in\mathscr{G}_{inv}$, then there is a continuous map $b: X\to H$ such that the map $x\mapsto b(gx)^{-1}f(x)b(x)$ is constant on $X$. 

If $g$ is undistorted, $\Delta_X(\bar{x})$ has strong $g$-specification and the cocycles $c_f^{(g),\pm}:\Delta_X\to H$ in Lemma \ref{limit exists in c_f^g, +} are equal. Then $b$ can be taken to be H\"{o}lder continuous.
\end{lem}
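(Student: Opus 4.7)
The plan is to construct $b$ explicitly on $\Delta_X(\bar{x})$ via the cocycle $c_f^{(g),+}$, show it extends continuously, and then verify the coboundary relation by density.

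First I would pick a reference point $x_0 \in \Delta_X(\bar{x}) \cap g^{-1}\Delta_X(\bar{x})$, which is nonempty by the density hypothesis, and define $b(x) := c_f^{(g),+}(x, x_0)$ for $x \in \Delta_X(\bar{x}) = \Delta_X(x_0)$. A direct manipulation of the partial products in Lemma \ref{limit exists in c_f^g, +} yields the identity
\[
c_f^{(g),+}(gx, gy) = f(x)\, c_f^{(g),+}(x,y)\, f(y)^{-1} \qquad\text{for all } (x,y)\in \Delta_X,
\]
which is the essential algebraic input. For $x,y \in \Delta_X(\bar{x})$, the cocycle identity together with bi-invariance of the metric gives
$d(b(x),b(y)) = d(c_f^{(g),+}(x,y), e_H)$, which tends to $0$ as $(x,y)\to\Delta$ by hypothesis. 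Thus $b$ is uniformly continuous on the dense subset $\Delta_X(\bar{x})$, and since $H$ is complete, it extends uniquely to a continuous $b \colon X \to H$.

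Next I would verify the coboundary relation on $\Delta_X(\bar{x}) \cap g^{-1}\Delta_X(\bar{x})$. For such $x$, using the identity above and the cocycle property,
\[
b(gx) = c_f^{(g),+}(gx, gx_0)\, c_f^{(g),+}(gx_0, x_0) = f(x)\, b(x)\, f(x_0)^{-1}\, c_f^{(g),+}(gx_0, x_0),
\]
so $b(gx)^{-1} f(x) b(x) = c_f^{(g),+}(x_0, gx_0)\, f(x_0)$, which is a constant independent of $x$. Since $\Delta_X(\bar{x}) \cap g^{-1}\Delta_X(\bar{x})$ is dense in $X$ and both sides are continuous in $x$, the identity $b(gx)^{-1} f(x) b(x) = \text{const}$ holds on all of $X$.

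For the H\"older statement, assume $g$ is undistorted, $\Delta_X(\bar{x})$ has strong $g$-specification, and $c_f^{(g),+} = c_f^{(g),-}$ on $\Delta_X$. Given $R \geq 0$ and $x,y \in \Delta_X(\bar{x})$ with $x_{B(N)} = y_{B(N)}$ for $N \geq \lceil s'\ell_S(g)\rho_g^{-1}(4R) + 2R + t' \rceil$, strong $g$-specification produces $z \in \Delta_X(\bar{x})$ with $(x,z)\in \Delta^+(g,R)$ and $(y,z)\in \Delta^-(g,R)$. The same estimates that appear in the proof of Lemma \ref{compare lemma 5 in CJ}, combined with the hypothesis $c_f^{(g),+} = c_f^{(g),-}$, yield
\[
d(c_f^{(g),+}(x,z), e_H) + d(c_f^{(g),+}(z,y), e_H) \leq 2Cr^{R}\sum_{j\geq 1} r^{[r_j]},
\]
where the right-hand sum is finite because $g$ is undistorted and so $\rho_g(j) \geq \lambda j - \mu$ by Proposition \ref{equivalence of undistorted g, linear (inverse) distortion function}. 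Then bi-invariance gives $d(b(x),b(y)) = d(c_f^{(g),+}(x,y), e_H) \leq 2Cr^R\sum_{j\geq 1} r^{[r_j]}$. Since undistortedness forces $\rho_g^{-1}(4R)$ to be linear in $R$, we obtain $N \leq c_1 R + c_2$, hence $R \geq (N-c_2)/c_1$, which converts the bound into $d(b(x),b(y)) \leq C'r_1^N$ with $r_1 := r^{1/c_1} < 1$. This gives H\"older continuity of $b$ on the dense set $\Delta_X(\bar{x})$, and the continuous extension inherits the same bound.

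I expect the main obstacle to be the quantitative step 4: tracking the constants through strong $g$-specification so that the final H\"older exponent $r_1 < 1$ is legitimately strict. This is precisely where undistortedness (linearity of $\rho_g$) enters essentially — without it, $\rho_g^{-1}(4R)$ need not be linear in $R$ and the conversion from an $r^R$-decay bound to an $r_1^N$-decay bound breaks.
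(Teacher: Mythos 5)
Your proposal is correct and follows essentially the same route as the paper: the first part reproduces the construction of $b$ from \cite[Lemma 6]{CJ} (which the paper simply cites, noting it works for $H\in\mathscr{G}_{inv}$), based on the same key identity $d(b(x),b(x'))=d(c_f^{(g),+}(x,x'),e_H)$, and the H\"{o}lder part is the paper's argument verbatim in structure — feed the specification estimate from Lemma \ref{compare lemma 5 in CJ} into the linearity of $\rho_g^{-1}$ coming from undistortedness to convert the $r^R$ bound into an $r_1^N$ bound, then extend by density. No gaps.
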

\begin{proof}
Observe that the proof of \cite[Lemma 6]{CJ} also works for $H\in\mathscr{G}_{inv}$, so we are left to check the continuous map $b$ defined there is H\"{o}lder continuous under the assumptions in the second part.

From the proof of \cite[Lemma 6]{CJ}, we know that for any $x, x'\in \Delta_X(\bar{x})\cap g^{-1}\Delta_X(\bar{x})$, 
\begin{eqnarray*}
d(b(x), b(x'))=d(b(x)b(x')^{-1}, e_H)=d(c_f^{(g), +}(x, x'), e_H). 
\end{eqnarray*}

Since $g$ is undistorted, from Proposition \ref{equivalence of undistorted g, linear (inverse) distortion function}, we know $\rho_g(n)\geq \lambda n$ for some $\lambda>0$ and all $n\geq 1$. Hence, $\rho_g^{-1}(n)\leq \frac{n}{\lambda}$.

Now, for any $R\in \mathbb{N}$, let $N=\lceil\frac{s'}{\lambda} \rceil \ell_S(g)4R+2R+\lceil t'\rceil+1\geq s'\ell_S(g)\frac{4R}{\lambda}+2R+t'+1 \geq\lceil s'\ell_S(g)\rho_g^{-1}(4R)+2R+t'\rceil$, from the estimate in the proof of Lemma \ref{compare lemma 5 in CJ}, 
we know that if $x_{B(N)}=x'_{B(N)}$,
then $d(c_f^{(g), +}(x, x'), e_H)\leq 2Cr^R\sum_{j\geq 0}r^{[r_j]}$. Here $r, C$ are constants appeared when defining H\"{o}lder continuity of $f$.

Hence, if $x_{B(N)}=x'_{B(N)}$, then 
$d(c_f^{(g), +}(x, x'), e_H)\leq  C'r'^N$, where $$C':=2C\sum_{j\geq 0}r^{[r_j]-\frac{\lceil t'\rceil+1}{2+4\ell_S(g)\lceil s'/\lambda\rceil}}>0, 0<r'=r^{\frac{1}{2+4\ell_S(g)\lceil s'/\lambda\rceil}}<1.$$ 

Therefore, $d(b(x), b(x'))\leq C'r'^N$ for $N=\lceil\frac{s'}{\lambda} \rceil \ell_S(g)4R+2R+\lceil t'\rceil+1$. Let $R$ change, then we deduce $d(b(x), b(x'))\leq C''r''^N$ for all $N\in \mathbb{N}$ and some constants $C''>0$, $0<r''<1$.

Then since $\Delta_X(\bar{x})\cap g^{-1}\Delta_X(\bar{x})$ is dense in $X$ and $b$ is continuous, a simple density argument implies that for all $x, x'\in X$ with $x_{B(N)}=x'_{B(N)}$, $d(b(x), b(x'))\leq C''r''^N$ for all $N\in \mathbb{N}$. Hence $b$ is H\"{o}lder continuous.
\end{proof}

\begin{lem}
\label{compare lemma 7 in CJ}
Assume $G$ has one end, $g_i\in G$ is an element with (SDT) property, and there exists a point $\bar{x}\in X$ such that $\Delta_X(\bar{x})\cap {g_i}^{-1}\Delta_X(\bar{x})$ is dense in $X$ and $$\lim_{\substack{(x,x')\to\Delta \\ x,x'\in\Delta_X(\bar{x})}}d(c_{f_{g_i}}^{(g_i),+}(x,x'),e_H)=0,$$ for all $i=1,\cdots, k$, where $k$ is any positive integer (maybe infinity), then there is a  continuous map $b: X\to H$ such that the map $x\mapsto b(gx)^{-1}c(g, x)b(x)$ is constant on $X$ (depending only on $g$) for all $g\in \langle g_1, \cdots, g_k \rangle$, where $c: G\times X\to H$ is a H\"{o}lder continuous cocycle and $H\in\mathscr{G}_{inv}$. If we further assume $g_i$ is undistorted and $\Delta_X(\bar{x})$ has strong $g_i$-specification for all $i$, then such a $b$ can be taken to be H\"{o}lder continuous.
\end{lem}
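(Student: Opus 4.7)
The plan is to combine the single-generator transfer maps produced by Lemma \ref{compare lemma 6 in CJ} into one map $b:X\to H$ that simultaneously trivializes the cocycle along every $g_i$. The starting point is to identify the limiting cocycles for different generators. Under the ambient hypotheses of Theorem \ref{main thm in 2nd approach} (one end and sub-exponential divergence), Lemma \ref{compare lemma 2 in CJ} gives $c_{f_{g_i}}^{(g_i),+}=c_{f_{g_j}}^{(g_j),+}$ on $\Delta_X$ for all $i,j$, and Lemma \ref{compare corollary 2 in CJ} identifies the $\pm$ versions. Denote the common cocycle by $c^{(\infty)}:\Delta_X\to H$.

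Next, I would fix a base point $x_0\in\Delta_X(\bar{x})$ and set $b(x):=c^{(\infty)}(x,x_0)$ for $x\in\Delta_X(\bar{x})$, so that $c^{(\infty)}(x,x')=b(x)b(x')^{-1}$ on $\Delta_X(\bar{x})\times\Delta_X(\bar{x})$. Combining this identity with the bi-invariance of the metric on $H$ gives $d(b(x),b(x'))=d(c^{(\infty)}(x,x'),e_H)$, so the hypothesis $\lim_{(x,x')\to\Delta}d(c_{f_{g_i}}^{(g_i),+}(x,x'),e_H)=0$ (for any single $i$) shows that $b$ is uniformly continuous on $\Delta_X(\bar{x})$. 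Since $\Delta_X(\bar{x})$ is dense in $X$ (a consequence of the density of $\Delta_X(\bar{x})\cap g_i^{-1}\Delta_X(\bar{x})$) and $H$ is complete, $b$ extends uniquely to a continuous map $b:X\to H$.

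Finally, for each $g_i$, the verification from the proof of Lemma \ref{compare lemma 6 in CJ} applies verbatim to this specific $b$, yielding that $x\mapsto b(g_ix)^{-1}c(g_i,x)b(x)\equiv\phi(g_i)\in H$ is constant. A direct application of the cocycle identity for $c$ then upgrades this to constancy on all of $\langle g_1,\dots,g_k\rangle$, with $g\mapsto\phi(g):=b(gx)^{-1}c(g,x)b(x)$ a group homomorphism. For the H\"{o}lder continuity upgrade, under the additional hypotheses that each $g_i$ is undistorted and $\Delta_X(\bar{x})$ has strong $g_i$-specification, the quantitative estimate in the second half of Lemma \ref{compare lemma 6 in CJ} (applied with any single $g_i$) delivers a H\"{o}lder exponent and constant for $b$; since this estimate depends only on the common cocycle $c^{(\infty)}$ and on $r,C$ from the H\"{o}lder continuity of $c$, the conclusion is independent of the chosen generator.

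The main obstacle in this plan is the identification $c_{f_{g_i}}^{(g_i),\pm}=c^{(\infty)}$ for every $i$, without which one obtains several different candidate maps instead of a single $b$. This identification is exactly where one-endedness and sub-exponential divergence are used, via Lemma \ref{compare lemma 2 in CJ}; once it is in hand, the remaining steps are a more or less mechanical extension of the single-generator arguments in Lemma \ref{compare lemma 6 in CJ}, and the case $k=\infty$ causes no additional trouble since the construction of $b$ uses only $c^{(\infty)}$ and a single base point $x_0$.
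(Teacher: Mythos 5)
Your proposal is correct and follows essentially the same route as the paper, which simply runs the proof of \cite[Lemma 7]{CJ} with Lemma \ref{compare lemma 2 in CJ} identifying the limiting cocycles $c_{f_{g_i}}^{(g_i),\pm}$ across generators and Lemma \ref{compare lemma 6 in CJ} supplying the single transfer map $b$ together with its H\"{o}lder upgrade. Your explicit remark that the identification step uses the sub-exponential divergence hypothesis from the ambient Theorem \ref{main thm in 2nd approach} (not restated in the lemma) is an accurate reading of how the lemma is meant to be applied.
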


\begin{proof}
The proof is the same as the proof of \cite[Lemma 7]{CJ} after replacing \cite[Lemma 6, Lemma 2]{CJ} with Lemma \ref{compare lemma 6 in CJ} and Lemma \ref{compare lemma 2 in CJ} respectively.
\end{proof}

\begin{lem}
\label{compare lemma 1 in CJ}
The statement is the same as \cite[Lemma 1]{CJ} except we replace ``continuous" by ``H\"{o}lder continuous" everywhere.
\end{lem}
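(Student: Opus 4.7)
The plan is to follow the proof of \cite[Lemma 1]{CJ} line by line and verify that every operation used there preserves H\"{o}lder continuity (instead of just continuity), under our standing assumption that $H\in\mathscr{G}_{inv}$ so that the metric on $H$ is bi-invariant. Recall from the strategy described after Lemma \ref{compare lemma 8 in CJ} that the role of [CJ, Lemma 1] is the final assembly step: given the transfer map $b$ produced by Lemma \ref{compare lemma 7 in CJ} on the normal subgroup generated by the (SDT) elements (or by the undistorted elements, in the H\"{o}lder case), it combines $b$ with a choice of coset representatives and uses the cocycle identity to produce a transfer map defined on all of $G$, together with a group homomorphism $\phi:G\to H$ realizing the cohomology $c(g,x)=b(gx)^{-1}\phi(g)b(x)$.

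The technical core of my plan is the following closure properties of the class of H\"{o}lder continuous maps $X\to H$, each immediate from the definition given in Section \ref{section on Holder cocyles}:
\begin{itemize}
\item[(i)] pointwise product and inversion: if $f_i$ are H\"{o}lder with data $(C_i,r_i)$, then $f_1\cdot f_2$ and $f_1^{-1}$ are H\"{o}lder, the product with data $(C_1+C_2,\max(r_1,r_2))$ by bi-invariance of $d$, and the inverse with data $(C_1,r_1)$;
\item[(ii)] precomposition with a single shift $x\mapsto g_0 x$: since $x_{B(n+\ell_S(g_0))}=y_{B(n+\ell_S(g_0))}$ forces $(g_0 x)_{B(n)}=(g_0 y)_{B(n)}$, the map $x\mapsto f(g_0 x)$ is H\"{o}lder with data $(Cr^{-\ell_S(g_0)},r)$;
\item[(iii)] evaluation along the cocycle: $x\mapsto c(g,x)$ is H\"{o}lder for every $g\in G$ by hypothesis.
\end{itemize}

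With these closure properties in hand, the CJ construction transfers verbatim. The assembled map involves, at any given point, only finitely many applications of (i)--(iii): one precomposition per fixed coset representative, one evaluation of $c$ per factor in a fixed word, and pointwise products/inversions for the cocycle identity. Hence the resulting transfer map is H\"{o}lder continuous, with constants and base computable from those of $c$ and of the $b$ produced by Lemma \ref{compare lemma 7 in CJ}. The verification that $\phi(g):=b(gx)^{-1}c(g,x)b(x)$ is independent of $x$ and is a homomorphism $G\to H$ is identical to the one in CJ, as it is a purely algebraic consequence of the cocycle identity and does not interact with regularity.

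The main (mild) obstacle is simply bookkeeping: one must check that the H\"{o}lder base stays bounded away from $1$ as the construction composes finitely many shifts, and that the H\"{o}lder constants stay finite after taking products. Both are automatic, since (ii) only rescales the constant by $r^{-\ell_S(g_0)}$ while leaving the base $r$ unchanged, and (i) changes the base only to the maximum of the bases involved, with a summed constant. No new input beyond Lemmas \ref{compare lemma 2 in CJ}--\ref{compare lemma 7 in CJ} and the closure properties (i)--(iii) is required.
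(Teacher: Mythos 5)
Your conclusion is right, but the route you describe does not match what \cite[Lemma 1]{CJ} actually does, and the paper's own proof is a one-line observation that makes almost all of your machinery unnecessary. The content of \cite[Lemma 1]{CJ} is: given a single map $b:X\to H$ such that $x\mapsto b(gx)^{-1}c(g,x)b(x)$ is constant for every $g$ in an infinite normal subgroup, the \emph{same} $b$ makes that expression constant for every $g\in G$. No new transfer map is assembled from coset representatives; the extension from the normal subgroup to all of $G$ is a dynamical statement (using normality together with the mixing/density hypotheses), not a construction. Consequently the paper's proof of the H\"{o}lder version is simply: the transfer map is not changed, so if the $b$ fed in from Lemma \ref{compare lemma 7 in CJ} is H\"{o}lder continuous, the transfer map in the conclusion is H\"{o}lder continuous for free. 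Your closure properties (i)--(iii) are correct statements (and (i) does use bi-invariance of the metric, as you note), but they are answering a question that does not arise.

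Two further cautions about your argument taken on its own terms. First, the independence of $\phi(g):=b(gx)^{-1}c(g,x)b(x)$ from $x$ for $g$ outside the normal subgroup is \emph{not} ``a purely algebraic consequence of the cocycle identity''; it is exactly the nontrivial dynamical content of \cite[Lemma 1]{CJ}. You are entitled to import it unchanged (it is insensitive to the regularity upgrade), but you should not mischaracterize it as algebra. Second, had the lemma really required assembling a map out of coset representatives, your phrase ``at any given point, only finitely many applications'' would not suffice: H\"{o}lder continuity demands uniform data $(C,r)$ over all of $X$, and the rescaling $C\mapsto Cr^{-\ell_S(g_0)}$ in your step (ii) is unbounded if the representatives $g_0$ range over infinitely many cosets of unbounded word length. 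That uniformity gap is moot here only because no such assembly occurs.
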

\begin{proof}
The proof still works here since we do not change the transfer map.
\end{proof}
Using these Lemmas, we can finish the proof of Theorem \ref{main thm in 2nd approach} and Corollary \ref{corollary on the membership of G_h}.
\begin{proof}[Proof of Theorem \ref{main thm in 2nd approach}]
Same proof as the proof of \cite[Theorem 1]{CJ} still works here. Note that we need to use Lemma \ref{compare lemma 8 in CJ} now instead of \cite[Lemma 8]{CJ}.
\end{proof}
\begin{proof}[Proof of Corollary \ref{corollary on the membership of G_h}]
By Lemma \ref{full shifts have strong specification}, we can apply Theorem \ref{main thm in 2nd approach} to full shifts. 
\end{proof}

\section{Examples, groups in $\cG_H$ and remarks}\label{last section}
Besides full shifts, we give another class of subshifts satisfying assumptions of Theorem \ref{main thm in 2nd approach}. 
\begin{example}(\textbf{The generalized golden mean subshifts}) Let $A=\{0,1,\cdots, k\}$ and $F_1,\cdots, F_m$ be non-empty finite subsets of $G$. Let $X(F_1,\cdots, F_m)$ be a subset of $A^G$ consisting of all $x\in A^G$ such that for every $1\leq j\leq m$, $g\in G$, there exists $g_j\in gF_j$ such that $x_{g_j}=0$. Then it is a subshift of $A^G$. Using the same argument as in the proof of \cite[Lemma 10]{CJ}, we also obtain that $\Delta_X$ has strong $a$-specification in the sense of definition \ref{D-specification} for every element $a$ in $G$ with infinite order. And hence similar to the proof of \cite[Corollary 4]{CJ}, we know that the generalized golden mean subshifts $X(F_1,\cdots, F_m)$ are examples of our theorem \ref{main thm in 2nd approach} 
\end{example}
\begin{remark}
In our main theorem, if we drop the condition of specification, the result does not hold any more even for the special case $G=\Z^d$, $d>1$, see for example \cite[Example 4.3]{Schmidt95}. 
\end{remark}
Now we will investigate the class of one-ended groups having undistorted elements and sub-exponential growing divergence functions. Before giving examples of groups with sub-exponential growing divergence functions, let us review definitions of wide groups, unconstricted groups, and thick groups \cite{bd,BDM,ds}.

Let $X$ be a connected, locally connected topological space. A point $x\in X $ is a (global) cut-point if $X\setminus\{x\}$ has at least two connected components.
\begin{definition}
We call a finitely generated group $G$ unconstricted if one of its asymptotic cones has no cut-points.
We say a finitely generated group $G$ wide if none of its asymptotic cones has a cut-point. 
\end{definition}
Clearly, wide groups are unconstricted. And by Stallings' Ends theorem, we know that unconstricted groups are one-ended groups. Recall that a finitely generated subgroup $H$ of a finitely generated group $G$ is undistorted if any word metric of $H$ is bi-Lipschitz equivalent to a word metric of $G$ restricted to $H$.
\begin{definition}
(Algebraic network of subgroups) Let $G$ be a finitely generated group,
$\cH$ be a finite collection of subgroups of $G$ and let $M > 0$. We say $G$ is an
$M$-algebraic network with respect to $\cH$ if
\begin{itemize}
\item All subgroups in $\cH$ are finitely generated and undistorted in $G$;
\item there is a finite index subgroup $G_1$ of $G$ such that $G \subset N_M(G_1)$ and such
that a finite generating set of $G_1$ is contained in $\bigcup_{H\in \cH}H$;
\item any two subgroups $H, H'$ in $\cH$ can be thickly connected in H in the sense that there exists
a finite sequence $H = H_1,\dots,H_n = H'$ of subgroups in $\cH$ such that for all $1 \leq i < n$, $H_i \cap H_{i+1}$ is infinite.
\end{itemize}
\end{definition}
\begin{definition}
(Algebraic thickness) Let $G$ be a finitely generated group. $G$ is called algebraically thick of order zero if it is unconstricted. We say $G$ is $M$-algebraically thick of order at most $n + 1$ with respect to $\cH$, where $\cH$ is a finite collection of subgroups of $G$ and $M > 0$, if
\begin{itemize}
\item $G$ is an $M$-algebraic network with respect to $H$;
\item all subgroups in $\cH$ are algebraically thick of order at most $n$.
\end{itemize}
$G$ is said to be algebraically thick of order $n + 1$ with respect to $\cH$, when $n$ is the smallest value for which this statement holds. And $G$ is algebraically thick of order $n + 1$ if $G$ is algebraically thick of order $n + 1$ with respect to $\cH$ for some $\cH$.
\end{definition}
The algebraic thickness property of $G$ does not depend on the word metric on $G$.

Let $X$ be a metric space. For $L,C>0$, an $(L,C)$-quasi-geodesic is an $(L,C)$-quasi-isometric embedding $f: I \to X,$ where $I$ is a connected subset in $\R$. For every subset $A$ of $X$ and $r>0$, we denote by $N_r(A):=\{x\in X:dist(x,A)<r\}$. For $C>0$, a subset $A$ in $X$ is called $C$-path connected if any two points in $A$ can be connected by a path in $N_C(A)$. We say that $A$ is $C$-quasi-convex
if any two points in $A$ can be connected in $N_C(A)$ by an $(C,C)$-quasi-geodesic. 
\begin{definition} (Tight algebraic network of subgroups). We say a finitely
generated group $G$ is an $M$-tight algebraic network with respect to $\cH$ if 
\begin{itemize}
\item $\cH$ is a collection of $M$-quasiconvex
subgroups
\item there is a finite index subgroup $G_1$ of $G$ such that $G \subset N_M(G_1)$ and such
that a finite generating set of $G_1$ is contained in $\bigcup_{H\in \cH}H$;
\item and for any two subgroups $H,H' \in \cH$ there exists a finite sequence $H = H_1, \dots , H_m=H'$ of subgroups in $\cH$ such that for all $1\leq  i < m$, the intersection $H_i \cap H_{i+1}$ is infinite and $M$-path connected. 
\end{itemize}
\end{definition}
\begin{definition} (Strong algebraic thickness). Let $G$ be a finitely generated group. $G$ is strongly algebraically thick of order zero if it is wide. Given $M >0$, we say G is $M$-strongly algebraically thick of order
at most $n+1$ with respect to a finite collection of subgroups $\cH$ of $G$, if
\begin{itemize}
\item $G$ is an $M$-tight algebraic network with respect to $\cH$;
\item all subgroups in $\cH$ are strongly algebraically thick of order at most $n$.
\end{itemize}
We say $G$ is strongly algebraically thick of order $n+1$ with respect to $\cH$, when $n$ is the smallest value for which this statement holds. And $G$ is strongly algebraically thick of order $n + 1$ if $G$ is strongly algebraically thick of order $n + 1$ with respect to $\cH$ for some $\cH$.
\end{definition}
The strongly algebraic thickness property of $G$ does not depend on the word metric on $G$.

We list some examples of $G$ with sub-exponential growing divergence functions. 

\begin{itemize}
\item linear divergence: From \cite[Proposition 1.1]{dms}, every group is wide if and only if it has linear divergence. And here are some examples of wide groups.
\begin{enumerate}
\item Non-virtually cyclic groups satisfying a law are wide \cite[Corollary 6.13]{ds}. A law is a word $w$ in $n$ letters $x_1, \dots , x_n$ and a group $G$ satisfies the law $w$ if $w = 1$ in $G$ whenever
$x_1,\dots, x_n$ are replaced by every set of $n$ elements in $G$. Solvable groups, uniformly amenable finitely generated groups are groups satisfying a law \cite[Corollary 6.16]{ds}.
\item Non-virtually cyclic groups whose center contains $\Z$ are wide \cite[Theorem 6.5]{ds}. 
\item Products of arbitrary infinite groups are wide \cite[Example 1, page 555]{BDM}. Note that although \cite[Example 1, page 555]{BDM} only mentioned that these groups are unconstricted, indeed from there we can get they are actually wide.
\item Let $G$ be an irreducible lattice in a semi-simple Lie group of higher rank. Assume that $G$ is either of $\Q$-rank 1 or is of the form $SL_n(\cO_S)$, where $n\geq 3$, $S$ is a finite set of valuations of a number field $K$ including all infinite valuations, and $\cO_S$ is the associated ring of $S$-integers. Then $G$ is wide \cite[Proposition 1.1 and Theorem 1.4]{dms}.
\item \cite[page 2455, last paragraph]{dms} Let $G$ be a connected semisimple Lie group with finite center, no nontrivial compact factors and $\R$-rank$>1$. Let $\Gamma$ be a uniform lattice of $G$. Then $\Gamma$ is quasi-isometric to $G$. And hence each asymptotic cone of $\Gamma$ is bi-Lipschitz equivalent to an asymptotic cone of $G$. With conditions of $G$, the quotient space $G/K$ where $K$ is a maximal compact subgroup of $G$ is a symmetric space of noncompact type and has no Euclidean factor. Thus, applying \cite[Theorem 5.2.1]{KL}, we get that every asymptotic cone of $G/K$ is an Euclidean building of rank$>1$ and every two points in the asymptotic cone belong to a 2-dimensional flat. Therefore, every asymptotic cone of $G/K$ has no cut points. Hence so are $G$ and $\Gamma$. Thus, $\Gamma$ is wide.   
\item R. Thompson groups $F:=\langle s,t|[st^{-1},s^{-1}ts]=[st^{-1},s^{-2}ts^2]=1\rangle$, $T$, $V$ \cite[Corollary 2.14]{GolanSapir}.
\end{enumerate}
\item polynomial growth divergence: if a finitely generated group $G$ is strongly algebraic thick
of order $n$, then its divergence is at most polynomial of degree $n + 1$ \cite[Corollary 4.17]{bd}. And here are some examples of strongly algebraic thick groups. Note that although in \cite{BDM} the authors only proved that these groups are algebraic thick , the proofs there also imply that they are actually strongly algebraic thick groups.
\begin{enumerate}
\item Mapping class groups $MCG(\Sigma_{g,p})$, where $\Sigma_{g,p}$ is an orientable surface of genus $g$ with $p$ punches satisfying $3g+p>4$ \cite[Theorem 8.1]{BDM};
\item $Aut(F_n), Out(F_n)$ for $n\geq 3$ \cite[Theorem 9.2]{BDM};
\end{enumerate}
\item sub-exponential growth: in \cite[Corollary 6.4]{OOS}, the authors constructed groups whose divergences are sub-exponential but non-polynomials. However, these groups are torsion and hence do not have undistorted elements.
\end{itemize}
Non-elementary word hyperbolic groups are examples of groups with at least exponential divergence, see \cite[Theorem 2.19]{al_8} or \cite{gromov_book on hyperbolic groups}.

Many groups satisfy property (UD). Here we list some of them: 
\begin{itemize}
\item finitely generated abelian groups \cite[Lemma 6.4]{GS}.
\item Heisenberg group $H_3(\mathbb{Z})$.
\item biautomatic groups \cite[Proposition 6.6]{GS}. Geometrically finite hyperbolic groups are biautomatic \cite[Theorem 11.4.1]{ECHLPT}
\item mapping class group $MCG(\Sigma_{g,m})$, where $\Sigma_{g,m}$ is an oriented surface of genus $g$ and $m$ punches \cite[Theorem 1.1]{FLM}.
\item (outer) automorphism group of a finitely generated free group \cite[Theorem 1.1]{Ali}.
\item semihyperbolic groups \cite[III. Lemma 4.18, p. 479]{BH}. Note that biautomatic groups are semihyperbolic \cite[page 59]{Farb}. 
\item Let $\Gamma$ be an irreducible uniform lattice in a semisimple linear Lie group $G\neq SO(n,1)$. Then $\Gamma$ is bicombable, i.e. $\Gamma$ is semihyperbolic \cite[Theorem 1.1 and page 59]{Farb}. Indeed, following the remark after \cite[Theorem 1.1]{Farb}, we can prove that every irreducible uniform lattice in a connected semisimple Lie group with finite center is also semihyperbolic.
\item $SL_n(\mathbb{Z})$ \cite[Examples 2.13--2.15]{CF}.
\item Let $(M;\omega)$ be a closed symplectically hyperbolic manifold and denote by $Ham(M;\omega)$ the group
of Hamiltonian diffeomorphisms. Then every finitely generated subgroup $G$ of $Ham(M;\omega)$ has undistorted elements \cite[Theorem 1.6.A and remark 1.6.C]{Polterovich}
\item R. Thompson groups $F$ \cite[Proposition 4]{Burillo}, $T$ \cite[Corollary 5.4]{BCST}, $V$ \cite[Theorem 1.3]{MR3134027}  (note that the R. Thompson group $V$ is $V_2$ in \cite{MR3134027}).
\end{itemize}
From the above arguments, we know that the class of one-ended groups having undistorted elements and sub-exponential growing divergence functions will contain the class (UW) and the class (ET), where (UW) is the class of wide groups having undistorted elements and (ET) is the class of one-ended groups being strongly algebraic thick of order $n$ for some $n\geq 1$.

The following groups are in the class (UW):
\begin{itemize}
\item Let $G$ be a connected semisimple higher rank Lie groups with finite center and no nontrivial compact factors. Let $\Gamma$ be an irreducible uniform lattice in $G$. Then $\Gamma$ is wide and has undistorted elements.
\item Baumslag-Solitar groups $BS(m, n)$ with $1<|m|=|n|$.
\item Heisenberg group $H_3(\mathbb{Z})$.
\item $\Z\times G$, where $G$ is infinite.
\item $SL_n(\Z)$, $n\geq 3$.
\item R. Thompson groups $F,T,V$.
\end{itemize}

To see $BS(m, n)\in (UW)$, observe that it has a finite index subgroup $\Z\times F_n$ which has one end, linear divergence and (UD) when $1<|m|=|n|$ and the following holds.

\begin{lem}
Let $G'<G$ be a subgroup inclusion of finitely generated groups. If the inclusion map is a quasi-isometry (e.g. $[G: G']<\infty$), and $G'$ has (UD) property, then $G$ also has (UD) property.
\end{lem}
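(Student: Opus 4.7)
The plan is to use the translation-number reformulation of property (UD) from Section~2.3: $G$ has (UD) if and only if it contains an element with strictly positive translation number with respect to some (equivalently, every) finite generating set. This lets me transport the witness from $G'$ into $G$ via the quasi-isometric inclusion.

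First, by hypothesis there exist $g\in G'$ and a finite generating set $S'$ of $G'$ with $\ell_{S'}(g^k)\geq k$ for all $k\in\N$; in particular $\tau_{S'}(g)\geq 1>0$, so $g$ has infinite order. Now fix any finite symmetric generating set $S$ of $G$. Since the inclusion $(G',d_{S'})\hookrightarrow(G,d_S)$ is a $(\lambda,c)$-quasi-isometric embedding for some $\lambda\geq1$, $c\geq0$, for every $k\geq1$ we have
$$\ell_S(g^k)\;=\;d_S(e,g^k)\;\geq\;\tfrac{1}{\lambda}d_{S'}(e,g^k)-c\;=\;\tfrac{1}{\lambda}\ell_{S'}(g^k)-c\;\geq\;\tfrac{k}{\lambda}-c.$$
Dividing by $k$ and letting $k\to\infty$ gives $\tau_S(g)\geq 1/\lambda>0$, so the element $g$, now viewed inside $G$, is undistorted with respect to $S$.

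Finally, to recover the literal form of (UD) (namely $\ell_S(h^k)\geq k$ for all $k$, not merely positivity of the translation number), I pass to a suitable power. Recall that $k\mapsto\ell_S(g^k)$ is subadditive, so Fekete's lemma yields $\tau_S(g)=\inf_k \ell_S(g^k)/k$; combined with $\tau_S(g^n)=n\tau_S(g)$, any integer $n$ with $n\tau_S(g)\geq1$ (for instance $n=\lceil\lambda\rceil$) satisfies
$$\ell_S\bigl((g^n)^k\bigr)\;=\;\ell_S(g^{nk})\;\geq\;nk\,\tau_S(g)\;\geq\;k$$
for every $k\in\N$. Hence the pair $(g^n,S)$ witnesses property (UD) for $G$.

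The argument is entirely elementary; the only mildly delicate point is bridging the gap between the defining inequality $\ell_S(h^k)\geq k$ and the translation-number characterization, which is handled cleanly by passing to the power $g^n$. The quasi-isometry hypothesis is used exactly once, to push the lower bound on $\ell_{S'}(g^k)$ across the inclusion into a lower bound on $\ell_S(g^k)$.
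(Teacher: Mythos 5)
Your argument is correct. The paper states this lemma without proof, and your route --- transporting the lower bound $\ell_{S'}(g^k)\geq k$ through the quasi-isometric embedding to get $\tau_S(g)\geq 1/\lambda>0$, then passing to the power $g^{\lceil\lambda\rceil}$ and using Fekete's lemma ($\tau_S(g)=\inf_k\ell_S(g^k)/k$) to recover the literal defining inequality of (UD) --- is exactly the standard argument one would expect here, with the one genuinely necessary subtlety (bridging the translation-number characterization back to the inequality $\ell_S(h^k)\geq k$) handled correctly.
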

The class of (ET) includes the following groups
\begin{enumerate}
\item $Aut(F_n), Out(F_n)$ for $n\geq 3$. 
\item Mapping class groups $MCG(\Sigma_{g,p})$, where $\Sigma_{g, p}$ is a closed, orientable, connected surface of genus $g>1$ with $p$-punches.
\end{enumerate}

Applying \cite[Section 3]{CV}, we know that $Aut(F_n), Out(F_n)$ and $MCG(\Sigma_{g,p})$ for $n\geq 3$, $g>1$, $p\geq 0$ have property $F\R$, i.e every its action on an $\R$-tree have a fixed point. As a consequence, they have property FA and hence applying \cite[Theorem 15, page 58]{Serre} and Stallings' End theorem we know that they are one-ended groups.

From \cite[Corollary 6.4]{OOS}, we ask the following question
\begin{question}
Are there finitely generated groups whose divergences are sub-exponential but non-polynomials have undistorted elements?
\end{question}

Our class of groups $\cG_H$ also contains all one-ended right-angled Artin groups. Let  $\Gamma$ be a finite, simplicial graph with vertex set $V$. The right-angled Artin group (RAAG) associated to $\Gamma$ is the group $A_\Gamma$ with presentation
$$A_\Gamma: =\{V ~|~ vw = wv \mbox { if $v$ and $w$ are connected by an edge in }\Gamma\}.$$
If $\Gamma$ is connected then $A_\Gamma$ is always has polynomial divergence \cite[Corollary 4.8]{BC}. Furthermore if $\Gamma$ has at least 3 vertices then $A_\Gamma$ has one end when $\Gamma$ is connected \cite[Theorem B]{BM} or \cite[Corollary 5.2]{Meier}. Because $A_\Gamma$ projects onto $\Z^n$, where $n$ is the number of vertices in $\Gamma$, by adding all commuting relations between the generators, for any generator $s$ in $A_\Gamma$, the length of $s^k$ in $A_\Gamma$ is greater than or equal to the length of $\bar{s}^k$ in $\Z^n$. Thus, $A_\Gamma$ is undistorted. Therefore, if $V$ has at least 3 vertices and is connected then the RAAG group $A_\Gamma$ belongs to $\cG_H$.

Similarly, certain right-angled Coxeter groups (RACGs) also belong to our class. Given a finite simplicial graph $\Gamma$ with vertices $V$, the associated RACG $W_\Gamma$ is the group with presentation 
$$W_\Gamma:=\{V~|~ v^2=1, vw = wv \mbox { if $v$ and $w$ are connected by an edge in V}\}.$$ 
For a graph $\Gamma$, we define the associated four-cycle graph $\Gamma^4$ as follows. The embedded loops of length four (i.e. four-cycles) in $\Gamma$ are the vertices of $\Gamma^4$. Two
vertices of $\Gamma^4$ are connected by an edge if the corresponding four-cycles in $\Gamma$ share a pair of adjacent edges. Given a subgraph $\Gamma_1$ of $\Gamma^4$, we define the support of $\Gamma_1$ to be the collection of vertices
of $\Gamma$ appearing in the four-cycles in $\Gamma$ corresponding to the vertices of $\Gamma_1$. A graph $\Gamma$ is said to be CFS (Component with
Full Support) if there exists a component of $\Gamma^4$ whose support is the entire vertex set of $\Gamma$. A point or vertex in $\Gamma$ is separating if its complement in $\Gamma$ is not connected. If $\Gamma$ is connected, triangle-free; has no separating vertices or edges then $W_\Gamma$ is one-ended \cite[Theorem 8.7.2]{Davis}. Furthermore, if it is join, i.e a complete bipartite graph, or CFS then $W_\Gamma$ has a polynomial divergence \cite[Theorem 1.1]{DaniThomas}. 
Besides, if $W_\Gamma$ is not finite, i.e. there exist two vertices, say $v$ and $w$, of $\Gamma$ that are not connected by an edge, then the element $g:=vw$ is undistorted in $W_\Gamma$ by applying Theorems 3.2.16, 3.3.4, 3.4.2 and Corollary 6.12.6 in \cite{Davis}.


Since we are unaware of any criterion to classify one-ended groups with both sub-exponential divergence function and property (UD), we want to end the paper with the following question:
\begin{question}
Let $G$ be a finitely generated non-torsion amenable group that is not virtually cyclic, does $G$ have (UD)? Does $G$ have sub-exponential divergence function? What about $G$ is any finitely generated infinite group with property (T) but not hyperbolic?
\end{question}

\begin{bibdiv}
\begin{biblist}
\bib{Ali}{article}{
   author={Alibegovi\'c, E.},
   title={Translation lengths in ${\rm Out}(F_n)$},
   journal={Geom. Dedicata},
   volume={92},
   date={2002},
   pages={87--93},
}  
\bib{al_8}{article}{
   author={Alonso, J. M.},
   author={Brady, T.},
   author={Cooper, D.},
   author={Ferlini, V.},
   author={Lustig, M.},
   author={Mihalik, M.},
   author={Shapiro, M.},
   author={Short, H.},
   title={Notes on word hyperbolic groups},
   note={Edited by Short},
   conference={
      title={Group theory from a geometrical viewpoint},
      address={Trieste},
      date={1990},
   },
   book={
      publisher={World Sci. Publ., River Edge, NJ},
   },
   date={1991},
   pages={3--63},}

\bib{BC}{article}{
   author={Behrstock, J.},
   author={Charney, R.},
   title={Divergence and quasimorphisms of right-angled Artin groups},
   journal={Math. Ann.},
   volume={352},
   date={2012},
   number={2},
   pages={339--356},
  
}

\bib{bd}{article}{
   author={Behrstock, J.},
   author={Dru\c tu, C.},
   title={Divergence, thick groups, and short conjugators},
   journal={Illinois J. Math.},
   volume={58},
   date={2014},
   number={4},
   pages={939--980},
   }
\bib{BDM}{article}{
   author={Behrstock, J.},
   author={Dru\c tu, C.},
   author={Mosher, L.},
   title={Thick metric spaces, relative hyperbolicity, and quasi-isometric
   rigidity},
   journal={Math. Ann.},
   volume={344},
   date={2009},
   number={3},
   pages={543--595},
  
}
\bib{MR3134027}{article}{
   author={Bleak, C.},
   author={Bowman, H.},
   author={Gordon Lynch, A.},
   author={Graham, G.},
   author={Hughes, J.},
   author={Matucci, F.},
   author={Sapir, E.},
   title={Centralizers in the R. Thompson group $V_n$},
   journal={Groups Geom. Dyn.},
   volume={7},
   date={2013},
   number={4},
   pages={821--865},

   }
\bib{BM}{article}{
   author={Brady, N.},
   author={Meier, J.},
   title={Connectivity at infinity for right angled Artin groups},
   journal={Trans. Amer. Math. Soc.},
   volume={353},
   date={2001},
   number={1},
   pages={117--132},
   
}

\bib{BH}{book}{
   author={Bridson, M. R.},
   author={Haefliger, A.},
   title={Metric spaces of non-positive curvature},
   series={Grundlehren der Mathematischen Wissenschaften [Fundamental
   Principles of Mathematical Sciences]},
   volume={319},
   publisher={Springer-Verlag, Berlin},
   date={1999},
  }
   
\bib{Burillo}{article}{
   author={Burillo, J.},
   title={Quasi-isometrically embedded subgroups of Thompson's group $F$},
   journal={J. Algebra},
   volume={212},
   date={1999},
   number={1},
   pages={65--78},
  
}
\bib{BCST}{article}{
   author={Burillo, J.},
   author={Cleary, S.},
   author={Stein, M.},
   author={Taback, J.},
   title={Combinatorial and metric properties of Thompson's group $T$},
   journal={Trans. Amer. Math. Soc.},
   volume={361},
   date={2009},
   number={2},
   pages={631--652},
 }

\bib{CF}{article}{
   author={Calegari, D.},
   author={Freedman, M. H.},
   title={Distortion in transformation groups},
   note={With an appendix by Yves de Cornulier},
   journal={Geom. Topol.},
   volume={10},
   date={2006},
   pages={267--293},
   }

\bib{CJ}{article}{
   author={Chung, N.},
   author={Jiang, Y.},
   title={Continuous Cocycle Superrigidity for Shifts and Groups with One End},
  journal={Math. Ann.},
   volume={368},
   date={2017},
   number={3-4},
   pages={1109--1132},
 }

\bib{cohen}{article}{
author={Cohen, D. B.},
title={Continuous cocycle superrigidity for the full shift over a finitely generated torsion group},
status={arXiv: 1706.03743},
}

\bib{CV}{article}{
   author={Culler, M.},
   author={Vogtmann, K.},
   title={A group-theoretic criterion for property ${\rm FA}$},
   journal={Proc. Amer. Math. Soc.},
   volume={124},
   date={1996},
   number={3},
   pages={677--683},
   }
   
\bib{DaniThomas}{article}{
   author={Dani, P.},
   author={Thomas, A.},
   title={Divergence in right-angled Coxeter groups},
   journal={Trans. Amer. Math. Soc.},
   volume={367},
   date={2015},
   number={5},
   pages={3549--3577},
  
}   
\bib{Davis}{book}{
   author={Davis, M. W.},
   title={The geometry and topology of Coxeter groups},
   series={London Mathematical Society Monographs Series},
   volume={32},
   publisher={Princeton University Press, Princeton, NJ},
   date={2008},
   
}

  \bib{dms}{article}{
   author={Dru\c tu, C.},
   author={Mozes, S.},
   author={Sapir, M.},
   title={Divergence in lattices in semisimple Lie groups and graphs of
   groups},
   journal={Trans. Amer. Math. Soc.},
   volume={362},
   date={2010},
   number={5},
   pages={2451--2505},}

\bib{ds}{article}{
   author={Dru\c tu, C.},
   author={Sapir, M.},
   title={Tree-graded spaces and asymptotic cones of groups},
   note={With an appendix by Denis Osin and Mark Sapir},
   journal={Topology},
   volume={44},
   date={2005},
   number={5},
   pages={959--1058},}      

\bib{ECHLPT}{book}{
   author={Epstein, D. B. A.},
   author={Cannon, J. W.},
   author={Holt, D. F.},
   author={Levy, S. V. F.},
   author={Paterson, M. S.},
   author={Thurston, W. P.},
   title={Word processing in groups},
   publisher={Jones and Bartlett Publishers, Boston, MA},
   date={1992},
   } 
\bib{Farb}{article}{
   author={Farb, B.},
   title={Combing lattices in semisimple Lie groups},
   conference={
      title={Groups---Korea '94 (Pusan)},
   },
   book={
      publisher={de Gruyter, Berlin},
   },
   date={1995},
   pages={57--67},
 
} 

\bib{FLM}{article}{
   author={Farb, B.},
   author={Lubotzky, A.},
   author={Minsky, Y.},
   title={Rank-1 phenomena for mapping class groups},
   journal={Duke Math. J.},
   volume={106},
   date={2001},
   number={3},
   pages={581--597}, 
}
 \bib{FH}{article}{
   author={Franks, J.},
   author={Handel, M.},
   title={Distortion elements in group actions on surfaces},
   journal={Duke Math. J.},
   volume={131},
   date={2006},
   number={3},
   pages={441--468},
  }
   
\bib{Furman}{article}{
   author={Furman, A.},
   title={On Popa's cocycle superrigidity theorem},
   journal={Int. Math. Res. Not. IMRN},
   date={2007},
   number={19},
   pages={Art. ID rnm073, 46},
  
}
\bib{gersten_div}{article}{
   author={Gersten, S. M.},
   title={Quadratic divergence of geodesics in ${\rm CAT}(0)$ spaces},
   journal={Geom. Funct. Anal.},
   volume={4},
   date={1994},
   number={1},
   pages={37--51},} 

 \bib{GS}{article}{
   author={Gersten, S. M.},
   author={Short, H. B.},
   title={Rational subgroups of biautomatic groups},
   journal={Ann. of Math. (2)},
   volume={134},
   date={1991},
   number={1},
   pages={125--158},}
   
\bib{GolanSapir}{article}{
author={Golan, G.},
author={Sapir, M.},
title={Divergence of Thompson groups},
status={preprint,  arXiv:1709.08144}
}

\bib{gromov_book on hyperbolic groups}{article}{
   author={Gromov, M.},
   title={Hyperbolic groups},
   conference={
      title={Essays in group theory},
   },
   book={
      series={Math. Sci. Res. Inst. Publ.},
      volume={8},
      publisher={Springer, New York},
   },
   date={1987},
   pages={75--263},}  
    
\bib{gromov_book on asymptotic invariants}{article}{
   author={Gromov, M.},
   title={Asymptotic invariants of infinite groups},
   conference={
      title={Geometric group theory, Vol.\ 2},
      address={Sussex},
      date={1991},
   },
   book={
      series={London Math. Soc. Lecture Note Ser.},
      volume={182},
      publisher={Cambridge Univ. Press, Cambridge},
   },
   date={1993},
   pages={1--295},}
\bib{KatokNitica}{book}{
   author={Katok, A.},
   author={Ni\c tic\u a, V.},
   title={Rigidity in higher rank abelian group actions. Volume I},
   series={Cambridge Tracts in Mathematics},
   volume={185},
   publisher={Cambridge University Press, Cambridge},
   date={2011},
  
}

\bib{katok_schmidt}{article}{
   author={Katok, A. B.},
   author={Schmidt, K.},
   title={The cohomology of expansive ${\bf Z}^d$-actions by automorphisms
   of compact, abelian groups},
   journal={Pacific J. Math.},
   volume={170},
   date={1995},
   number={1},
   pages={105--142},}
   
\bib{KatokSpatzier}{article}{
   author={Katok, A.},
   author={Spatzier, R. J.},
   title={First cohomology of Anosov actions of higher rank abelian groups
   and applications to rigidity},
   journal={Inst. Hautes \'Etudes Sci. Publ. Math.},
   number={79},
   date={1994},
   pages={131--156},
  
}
\bib{KL}{article}{
   author={Kleiner, B.},
   author={Leeb, B.},
   title={Rigidity of quasi-isometries for symmetric spaces and Euclidean
   buildings},
   journal={Inst. Hautes \'Etudes Sci. Publ. Math.},
   number={86},
   date={1997},
   pages={115--197 (1998)},
  
}

\bib{xinli}{article}{
author={Li, X.},
title={Continuous orbit equivalence rigidity},
status={to appear in Ergod. Th. Dyn. Sys.}
}	
\bib{Meier}{article}{
   author={Meier, J.},
   title={Geometric invariants for Artin groups},
   journal={Proc. London Math. Soc. (3)},
   volume={74},
   date={1997},
   number={1},
   pages={151--173},
   
}
\bib{OOS}{article}{
   author={Ol\cprime shanskii, A. Y.},
   author={Osin, D. V.},
   author={Sapir, M. V.},
   title={Lacunary hyperbolic groups},
   note={With an appendix by Michael Kapovich and Bruce Kleiner},
   journal={Geom. Topol.},
   volume={13},
   date={2009},
   number={4},
   pages={2051--2140},
  
}
\bib{Popa2}{article}{
   author={Popa, S.},
   title={Cocycle and orbit equivalence superrigidity for malleable actions
   of $w$-rigid groups},
   journal={Invent. Math.},
   volume={170},
   date={2007},
   number={2},
   pages={243--295},
  }
\bib{Popa3}{article}{
   author={Popa, S.},
   title={On the superrigidity of malleable actions with spectral gap},
   journal={J. Amer. Math. Soc.},
   volume={21},
   date={2008},
   number={4},
   pages={981--1000},
   }  
   
\bib{Polterovich}{article}{
   author={Polterovich, L.},
   title={Growth of maps, distortion in groups and symplectic geometry},
   journal={Invent. Math.},
   volume={150},
   date={2002},
   number={3},
   pages={655--686},
  
}

\bib{Schmidt95}{article}{
   author={Schmidt, K.},
   title={The cohomology of higher-dimensional shifts of finite type},
   journal={Pacific J. Math.},
   volume={170},
   date={1995},
   number={1},
   pages={237--269},
}

\bib{Serre}{book}{
   author={Serre, J.-P.},
   title={Trees},
   note={Translated from the French by John Stillwell},
   publisher={Springer-Verlag, Berlin-New York},
   date={1980},
 }

\bib{walters}{book}{
   author={Walters, P.},
   title={An introduction to ergodic theory},
   series={Graduate Texts in Mathematics},
   volume={79},
   publisher={Springer-Verlag, New York-Berlin},
   date={1982},
   
}   

\end{biblist}
\end{bibdiv}

\end{document}